\providecommand{\R}{\mathbb{R}}
\providecommand{\Z}{\mathbb{Z}}
\providecommand{\N}{\mathbb{N}}
\providecommand{\B}{\mathbb{B}} 
\providecommand{\dih}{\mathbb{D}} 
\providecommand{\hsd}{\mathscr{H}} 
\providecommand{\symdiff}{\mathbin{\triangle}} 
\providecommand{\apar}{\alpha}  
\providecommand{\solC}{\mathcal{C}} 
\providecommand{\solS}{\mathcal{S}} 
\providecommand{\solD}{\mathcal{D}} 
\providecommand{\solU}{\mathcal{U}}
\providecommand{\solA}{\mathcal{A}} 
\providecommand{\gsum}{\mathfrak{g}}  
\providecommand{\bsum}{\mathfrak{b}} 
\providecommand{\rotation}{\textsf{R}} 
\DeclarePairedDelimiter\abs{\lvert}{\rvert} 
\DeclarePairedDelimiter\interval{]}{[}
\DeclarePairedDelimiter\Interval{[}{[}
\DeclarePairedDelimiter\intervaL{]}{]} 
\pgfmathsetmacro{\unitscale}{0.95} 
\providecommand{\picpfad}{figures-ellipsoid}
\tikzset{axis/.style={black,densely dotted}, 
/ellipsoid/.cd,
apar/.initial=1, 
bpar/.initial=1, 
cpar/.initial=1, 
theta/.initial=66, 
phi/.initial=135, 
scale/.initial=\unitscale, 
side/.initial=0, 
}
\newenvironment{ellipsoid}[1][]{%
\tikzset{/ellipsoid/.cd,#1}%
\pgfmathsetmacro{\apar}{\pgfkeysvalueof{/ellipsoid/apar}}%
\pgfmathsetmacro{\bpar}{\pgfkeysvalueof{/ellipsoid/bpar}}%
\pgfmathsetmacro{\cpar}{\pgfkeysvalueof{/ellipsoid/cpar}}%
\pgfmathsetmacro{\thetapar}{\pgfkeysvalueof{/ellipsoid/theta}}%
\pgfmathsetmacro{\phipar}{\pgfkeysvalueof{/ellipsoid/phi}}%
\pgfmathsetmacro{\xscale}{abs(\apar*cos(\phipar)+(\bpar*sin(\phipar))^2/(\apar*cos(\phipar)))/sqrt((\bpar*tan(\phipar)/\apar)^2+1)}%
\pgfmathsetmacro{\thetaO}{atan(\xscale*\cpar*tan(\thetapar)/(\apar*\bpar))}%
\pgfmathsetmacro{\yscale}{sqrt(abs(\cpar*sin(\thetapar))^2+abs(\apar*\bpar*cos(\thetapar)/\xscale)^2)}%
\pgfmathsetmacro{\yslant}{sign(tan(\phipar))*sign(\bpar-\apar)*sqrt(abs(
(\apar*cos(\phipar))^2
+(\apar*sin(\phipar)*cos(\thetapar))^2
+(\bpar*sin(\phipar))^2
+(\bpar*cos(\phipar)*cos(\thetapar))^2
-\xscale^2-(\yscale*cos(\thetaO))^2 
))/\yscale}%
\pgfmathsetmacro{\phiO}{acos((\apar/\xscale)*cos(\phipar))}%
\tdplotsetmaincoords{\thetapar}{\phipar}%
\begin{tikzpicture}[tdplot_main_coords,line cap=round,line join=round,scale=\pgfkeysvalueof{/ellipsoid/scale},semithick,baseline={(0,0,0)}]
\ifnum\pgfkeysvalueof{/ellipsoid/side}<1
\tdplotsetmaincoords{\thetaO}{\phiO}
\begin{scope}[tdplot_main_coords,yscale=\yscale,xscale=\xscale,yslant=\yslant,black!30]
\path[tdplot_screen_coords,inner color=white,outer color=cyan!5!black!20](0,0)circle(1);
\foreach\laengengrad in {0,45,...,135}{
\tdplotsetthetaplanecoords{\laengengrad}
\pgfmathsetmacro{\start}{-atan(1/(tan(\thetaO)*(cos(\laengengrad)*sin(\phiO)-cos(\phiO)*sin(\laengengrad))))}
\ifdim\start pt>0pt \pgfmathsetmacro{\start}{\start-180} \fi
\tdplotdrawarc[tdplot_rotated_coords]{(0,0,0)}{1}{180+\start}{360+\start}{}{}
}%
\foreach\breitengrad in {-60,-30,...,60}{
\pgfmathsetmacro{\visiblerange}{180-acos(-sign(\breitengrad)*min(1,abs(tan(\breitengrad)*tan(90-\thetaO))))}%
\ifdim\visiblerange pt>0pt
\tdplotdrawarc{(0,0,{sin(\breitengrad)})}{{cos(\breitengrad)}}{\phiO+90-\visiblerange}{\phiO+90+\visiblerange}{}{}\fi}%
\end{scope}\fi%
}{\ifnum\pgfkeysvalueof{/ellipsoid/side}>-1
\tdplotsetmaincoords{\thetaO}{\phiO}
\begin{scope}[tdplot_main_coords,yscale=\yscale,xscale=\xscale,yslant=\yslant,black]
\foreach\laengengrad in {0,45,...,135}{
\tdplotsetthetaplanecoords{\laengengrad}
\pgfmathsetmacro{\start}{-atan(1/(tan(\thetaO)*(cos(\laengengrad)*sin(\phiO)-cos(\phiO)*sin(\laengengrad))))}
\ifdim\start pt>0pt \pgfmathsetmacro{\start}{\start-180} \fi
\tdplotdrawarc[tdplot_rotated_coords]{(0,0,0)}{1}{\start}{180+\start}{}{}
}%
\foreach\breitengrad in {-60,-30,...,60}{
\pgfmathsetmacro{\visiblerange}{acos(-sign(\breitengrad)*min(1,abs(tan(\breitengrad)*tan(90-\thetaO))))}%
\ifdim\visiblerange pt>0pt
\tdplotdrawarc{(0,0,{sin(\breitengrad)})}{{cos(\breitengrad)}}{\phiO-90+\visiblerange}{\phiO-90-\visiblerange}{}{}\fi}%
\draw[tdplot_screen_coords](0,0)circle(1);
 \end{scope}\fi%
\begin{scope}[tdplot_screen_coords]
\path(current bounding box.south west)coordinate(SW);
\path(current bounding box.north east)++(0,-0.5)coordinate(NE);
\pgfresetboundingbox
\useasboundingbox(SW)rectangle(NE);
\end{scope}
\end{tikzpicture}%
}
\newtheoremstyle{plain}
  {\topsep}   
  {0pt}       
  {\itshape}  
  {0pt}       
  {\bfseries} 
  {.}         
  {5pt plus 1pt minus 1pt} 
  {}          
\newtheoremstyle{remark}
  {\topsep}   
  {0pt}       
  {\normalfont}
  {0pt}       
  {\itshape}  
  {.}         
  {5pt plus 1pt minus 1pt} 
  {}          
\theoremstyle{plain}
\newtheorem{theorem}{Theorem}[section] 
\newtheorem{lemma}[theorem]{Lemma}
\newtheorem{corollary}[theorem]{Corollary}  
\theoremstyle{remark}
\newtheorem{remark}[theorem]{Remark}
\title{Equivariant free boundary minimal discs and~annuli in ellipsoids} 
\author{Mario B. Schulz}
\date{\vspace*{-3ex}} 
\newcommand\printaddress{{
\setlength{\parindent}{17pt}
\bigskip
\par
{\scshape Mario B. Schulz}
\newline 
Università di Trento, 
Dipartimento di Matematica, 
via Sommarive 14, 
38123 Povo, 
Italy
\newline
\textit{E-mail address:} 
\texttt{mario.schulz@unitn.it}
}} 
\begin{document}

\maketitle

\begin{abstract} 
We employ equivariant variational methods to construct new examples of nonplanar free boundary minimal discs in ellipsoids. 
We also prove that every ellipsoid contains at least three distinct embedded free boundary minimal annuli with dihedral symmetry. 
\end{abstract}

\section{Introduction}

The classical Plateau problem asks for a minimal surface bounded by a given Jordan curve. 
This problem was famously resolved by Douglas \cite{Douglas1931} and Radó \cite{Rado1930}, who independently proved the existence of a solution with the topology of a disc. 
A few years later, Courant \cite[Part~II]{Courant1940} addressed the ``Plateau problem with free boundaries'', which concerns the existence of nontrivial area minimising surfaces whose boundaries are free to move on a given manifold. 
Given a compact, three-dimensional ambient manifold $M$ with boundary $\partial M$, 
we call a compact, two-dimensional submanifold of $M$ which is stationary (and not necessarily minimising) for the area functional among all surfaces $\Sigma\subset M$ with boundary $\partial\Sigma=\Sigma\cap\partial M$ a \emph{free boundary minimal surface}.  
Equivalently, a free boundary minimal surface has vanishing mean curvature and meets the ambient boundary $\partial M$ orthogonally along its own boundary. 
 
The case where $M$ is the Euclidean unit ball $\B^3\subset\R^3$ 
has attracted considerable attention, partly due to its intriguing connection with the optimisation problem for the first Steklov eigenvalue on surfaces with boundary \cite{FraserSchoen2011,FraserSchoen2016,KarpukhinKokarevPolterovich2014,GirouardLagace2021}.
Existence results have been obtained using 
gluing methods \cite{FolhaPacardZolotareva2017,KapouleasLi2017,KapouleasMcGrath2020,KapouleasWiygul2017,KapouleasZou2021,CSWnonuniqueness,CSWstackings}, min-max methods \cite{GruterJost1986,Ketover2016FBMS,Li2015,CarlottoFranzSchulz2022,FranzSchulz2023,HaslhoferKetover} and via Steklov eigenvalue optimisation \cite{Petrides,KarpukhinKusnerMcGrathStern}. 
Equally interesting is the question whether a given free boundary minimal surface is unique in a given class of solutions. 
Nitsche \cite{Nitsche1985} proved the uniqueness of the equatorial disc 
in the class of immersed free boundary minimal discs in $\B^3$ up to ambient isometries. 
This result has been generalised to higher codimensions by Fraser and Schoen \cite{FraserSchoen2015}. 
A famous conjecture asserts the uniqueness of the critical catenoid in the class of embedded free boundary minimal annuli in $\B^3$ up to ambient isometries \cite{FraserLi2014}. 
In \cite{McGrath2018,KusnerMcGrath2020}, the uniqueness of the critical catenoid has been proved under additional symmetry assumptions.  
In general however, embedded free boundary minimal surfaces in $\B^3$ are nonunique in the class of solutions with the same topology and symmetry group \cite{CSWnonuniqueness}. 
  
The construction of free boundary minimal surfaces in more general ambient manifolds $M$ has been pioneered by Struwe \cite{Struwe1984}, who constructed parametric minimal discs with free boundary constraint to surfaces $S\subset\R^3$ diffeomorphic to the sphere, and by Grüter and Jost \cite{GruterJost1986} who proved the existence of an embedded free boundary minimal disc in any convex domain $M\subset\R^3$. 
This emphasis on solutions with the topology of a disc is reminiscent of the classical Plateau problem and their existence has been investigated by several authors \cite{Fraser2000,Laurain2019,Lin2020,LiZhou2021}. 
Haslhofer and Ketover \cite{HaslhoferKetover} proved that any strictly convex ball $M$ with nonnegative Ricci-curvature contains at least two embedded free boundary minimal discs. 
Moreover the area of their second solution is strictly less than twice the area of the Grüter--Jost solution.  
An interesting consequence of this result is \cite[Corollary 1.5]{HaslhoferKetover} stating that ellipsoids 
\begin{align}
\label{eqn:ellipsoid}
M_\apar\vcentcolon=\{(x_1,x_2,x_3)\in\R^3\mid (x_1/\apar_1)^2+(x_2/\apar_2)^2+(x_3/\apar_3)^2\leq 1\}	
\end{align}
with $\alpha=(\alpha_1,\alpha_2,\alpha_3)\in\interval{0,\infty}^3$ 
satisfying $\apar_3\geq2\max\{\apar_1,\apar_2\}$ contain a \emph{nonplanar}, embedded free boundary minimal disc, in addition to the three planar solutions 
\begin{align}
\label{eqn:planar}
\solD_\iota&\vcentcolon=M_\apar\cap\{x_\iota=0\}, \qquad \iota\in\{1,2,3\}.
\end{align}
In the case $\apar_1=\apar_2$, this statement recovers an independent result of Petrides \cite{Petrides} about nonplanar, embedded free boundary minimal discs in $M_\apar$ which converge to the planar solution $\solD_3$ with multiplicity $2$ as $\apar_3\to\infty$. 
These discoveries are in stark contrast with Nitsche's \cite{Nitsche1985} aforementioned uniqueness result in $\mathbb{B}^3$ and solve a long-standing open problem originally raised by Smyth \cite[p.\,411]{Smyth1984} in the '80s. (see also \cite[p.\,335]{DHKW1992}). 
Petrides' approach relies on the optimisation of combinations of first and second Steklov eigenvalues on the disc while Haslhofer and Ketover employed a two-parameter min-max construction in arbitrary convex balls with nonnegative Ricci-curvature. 
In this article, we prove several existence results demonstrating that a one-parameter equivariant min-max approach suffices to construct different types of nonplanar free boundary minimal discs and annuli in Euclidean ellipsoids.  

\textbf{Notation.} 
We equip $\R^3$ with standard Cartesian coordinates $x_1,x_2,x_3$ and denote the $k$-dimensional Hausdorff measure on $\R^3$ by $\hsd^k$. 
Given $\iota\in\{1,2,3\}$, the intersection of the ellipsoid $M_\apar$ defined in \eqref{eqn:ellipsoid} with the $x_\iota$-axis is denoted by $\xi_\iota$ and $\rotation_\iota$ denotes the rotation of angle $\pi$ around the $x_\iota$-axis. 
By definition, $\rotation_\iota M_\apar=M_\apar$ for every $\iota\in\{1,2,3\}$ and any choice of $\alpha\in\interval{0,\infty}^3$. 
Let $\dih_1$ be the group of Euclidean isometries generated by $\rotation_1$ and 
let $\dih_2$ be the group of Euclidean isometries generated by $\{\rotation_1,\rotation_2\}$. 
Note that $\dih_1\simeq\Z_2$ and $\rotation_3=\rotation_2\circ\rotation_1\in\dih_2\simeq\Z_2\times\Z_2$.  
Our convention is consistent with the notation in \cite{CarlottoFranzSchulz2022,CSWnonuniqueness,FranzSchulz2023}, where $\dih_n$ denotes the dihedral group of order $2n$ acting on $\R^3$.  
 
\begin{theorem}\label{thm:solC}
For any $\alpha\in\interval{0,\infty}^3$ satisfying $\apar_3\geq2\apar_1$ and $\apar_3>\apar_2$ 
the ellipsoid $M_\apar$ contains an embedded, $\dih_1$-equivariant free boundary minimal disc $\solC$ with the following properties. 
\begin{enumerate}[label={\normalfont(\roman*)},nosep]
\item\label{thm:solC-i}   $\solC$ is nonplanar with area $\hsd^2(\solC)<2\hsd^2(\solD_3)$.  
\item\label{thm:solC-ii}  $\solC$ intersects the segment $\xi_1$ exactly once and the intersection is orthogonal. 
\item\label{thm:solC-iii} $\solC$ has equivariant index equal to $1$. 
\end{enumerate} 
\end{theorem}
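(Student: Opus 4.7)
The plan is to apply the one-parameter equivariant Simon--Smith min-max scheme for free boundary minimal surfaces, as developed in~\cite{FranzSchulz2023, CarlottoFranzSchulz2022}, to the $\dih_1$-action on $M_\apar$. Concretely, I would work with a saturated class $\Pi$ of $\dih_1$-equivariant relative sweepouts $\{\Phi_t\}_{t\in[0,1]}$ of $(M_\apar,\partial M_\apar)$ by topological discs whose generic slice has odd mod-$2$ intersection number with the fixed segment $\xi_1$. The associated equivariant width $W_{\dih_1}\vcentcolon=\inf_\Pi\sup_t\hsd^2(\Phi_t)$ is then realised by an embedded, $\dih_1$-equivariant free boundary minimal disc $\solC\subset M_\apar$ of equivariant Morse index at most one. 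The crucial quantitative input is an explicit competitor with $\sup_t\hsd^2(\Phi_t)<2\hsd^2(\solD_3)$. A natural candidate nucleates two small spherical caps at the endpoints of $\xi_1\cap\partial M_\apar$, grows them $\dih_1$-equivariantly along $\partial M_\apar$ until they merge close to $\partial\solD_3$ and flatten into a configuration approximating $\solD_3$ with multiplicity two near the maximum-area slice, and then collapses the family symmetrically; the strict inequality $\apar_3>\apar_2$ provides a $\dih_1$-equivariant bowing of the two sheets toward $\solD_2$ which strictly decreases the peak area below $2\hsd^2(\solD_3)$.

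Given the min-max output $\solC$, the three asserted properties follow as follows. Since the fixed-point set of $\rotation_1$ inside $M_\apar$ is exactly $\xi_1$, every $\rotation_1$-fixed point of $\solC$ lies on $\xi_1$. At any isolated intersection point $p\in\solC\cap\xi_1$ the tangent plane $T_p\solC$ must be $\rotation_1$-invariant, and among the $\rotation_1$-invariant $2$-planes through $p$ only the one perpendicular to $\xi_1$ yields an isolated intersection, so $T_p\solC\perp\xi_1$ is automatic. Since $\solC$ is a disc, $\rotation_1$ acts as an orientation-preserving involution on it whose isolated fixed points each have index $+1$, and their total therefore equals $\chi(\solC)=1$; this forces $|\solC\cap\xi_1|=1$ and proves~\ref{thm:solC-ii}. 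In particular this already excludes $\solC=\solD_2$ and $\solC=\solD_3$, both of which contain the entire segment $\xi_1$. The hypothesis $\apar_3\geq2\apar_1$ gives $2\hsd^2(\solD_3)=2\pi\apar_1\apar_2\leq\pi\apar_2\apar_3=\hsd^2(\solD_1)$, so the strict area bound $\hsd^2(\solC)<2\hsd^2(\solD_3)$ also excludes $\solC=\solD_1$, proving~\ref{thm:solC-i}. Property~\ref{thm:solC-iii} follows by combining the equivariant upper index bound from min-max with the observation that $\solC$ cannot be equivariantly stable: otherwise the sweepout realising $W_{\dih_1}$ could be locally pushed off $\solC$ in an area-decreasing direction, contradicting the definition of $W_{\dih_1}$.

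The main obstacle is the construction of the competitor sweepout: the natural low-area candidates all degenerate toward the doubled planar disc $2\solD_3$ of borderline area $2\hsd^2(\solD_3)$, so one must use the strict hypothesis $\apar_3>\apar_2$ quantitatively to open up a $\dih_1$-equivariant, strictly area-decreasing deformation off this borderline while preserving the mod-$2$ linking with $\xi_1$ that makes the family admissible in $\Pi$; balancing these two competing requirements is the delicate point of the argument.
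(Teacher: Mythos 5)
Your overall strategy — one-parameter $\dih_1$-equivariant Simon--Smith min-max with a competitor sweepout of peak area below $2\hsd^2(\solD_3)$ — does match the paper, but your argument for property~\ref{thm:solC-ii} has a genuine gap, and this gap also breaks your proof of nonplanarity.

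The Lefschetz-type count you run (isolated fixed points of $\rotation_1|_{\solC}$ each of index $+1$, summing to $\chi(\solC)=1$) presupposes that the fixed-point set of $\rotation_1$ on $\solC$ is discrete, i.e.\ that $\rotation_1$ restricts to an \emph{orientation-preserving} involution of $\solC$. That is only true when $\solC$ meets $\xi_1$ transversally; it fails precisely in the alternative case $\xi_1\subset\solC$, where $\rotation_1|_{\solC}$ is a reflection with a one-dimensional fixed-point set and the index formula does not apply. So your argument does not rule out $\xi_1\subset\solC$ — it silently assumes it. You then use \ref{thm:solC-ii} to exclude $\solD_2$ and $\solD_3$, so the nonplanarity claim in \ref{thm:solC-i} inherits the same gap. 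You cannot appeal to the admissibility condition ``odd mod-$2$ intersection with $\xi_1$'' of the sweepout slices either, since varifold limits of a min-max sequence need not inherit transversality to the axis (indeed the limit may fall into a more symmetric stratum). The paper closes this hole by a separate argument: if $\xi_1\subset\solC$ then the symmetry type of the limit differs from that of the slices, which by the equivariant min-max theory forces the multiplicity $m$ to be even; combined with the isoperimetric lower bound $\hsd^2(\solC)\geq\hsd^2(\solD_3)$ (using that $\solC$ bisects $M_\apar$) this contradicts $m\hsd^2(\solC)<2\hsd^2(\solD_3)$. You would need this, or some equivalent mechanism, to make \ref{thm:solC-ii} go through.

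A secondary point: you attribute the hypothesis $\apar_3>\apar_2$ to the \emph{upper} area bound on the competitor (``bowing the sheets toward $\solD_2$''), but in the paper's construction the peak-area estimate $\sup_t\hsd^2(\Sigma_t^\solC)<2\hsd^2(\solD_3)$ holds for every $\apar$ and uses no relation between $\apar_2$ and $\apar_3$. The strict inequality $\apar_3>\apar_2$ (together with $\apar_3\geq2\apar_1>\apar_1$) is what guarantees $\apar_3>\max\{\apar_1,\apar_2\}$, which is needed for the \emph{lower} bound $W_\Pi>\hsd^2(\solD_3)$ via the sharp isoperimetric inequality and its stability in $M_\apar$. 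Relatedly, your treatment leaves implicit both why the min-max output has disc topology (the paper invokes topological lower semicontinuity from the min-max theory) and why excluding $\solD_1,\solD_2,\solD_3$ suffices for nonplanarity (the paper first classifies all planar free boundary minimal surfaces in $M_\apar$).
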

 
Theorem~\ref{thm:solC} extends the existence results stated in \cite[Corollary 1.5]{HaslhoferKetover} and \cite{Petrides} by relaxing the assumption on $\apar$. 
It is however possible that $\solC$ coincides with the solutions from \cite{HaslhoferKetover,Petrides} in ellipsoids where those results apply. 
In contrast, the following theorem establishes the existence of a novel type of nonplanar free boundary minimal disc (see Figure \ref{fig:solC}).

\begin{theorem}\label{thm:solS}
For any $\alpha\in\interval{0,\infty}^3$ satisfying $\apar_3\geq3\apar_2$ and $\apar_3>\apar_1$ 
the ellipsoid $M_\apar$ contains an embedded, $\dih_1$-equivariant free boundary minimal disc $\solS$ with the following properties. 
\begin{enumerate}[label={\normalfont(\roman*)},nosep]
\item\label{thm:solS-i}    $\solS$ is nonplanar with area strictly between $\hsd^2(\solD_3)$ and $3\hsd^2(\solD_3)$.  
\item\label{thm:solS-ii}   $\solS$ contains the segment $\xi_1$. 
\item\label{thm:solS-iii}  $\solS$ has equivariant index equal to $1$. 
\end{enumerate} 
\end{theorem}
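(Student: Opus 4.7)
The plan is to apply an equivariant one-parameter min-max scheme, in the spirit of the argument for Theorem~\ref{thm:solC}, adapted so that every slice of the sweepout is a $\dih_1$-equivariant surface containing the axis segment $\xi_1$.

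First, I would construct an admissible continuous sweepout $\{\Sigma_t\}_{t\in[0,1]}$ of $\dih_1$-equivariant generalized surfaces in $M_\apar$, each with $\xi_1$ in its support, having degenerate endpoints collapsed onto $\xi_1$, and satisfying
\begin{equation*}
\sup_{t\in[0,1]} \hsd^2(\Sigma_t) \;<\; 3\,\hsd^2(\solD_3).
\end{equation*}
A natural candidate is built from ``doubled half-discs'': each slice consists of a half-disc in $M_\apar\cap\{x_3\geq 0\}$ bounded by $\xi_1$ and an arc on $\partial M_\apar\cap\{x_3\geq 0\}$, glued along $\xi_1$ to its $\rotation_1$-image in the lower half-ellipsoid. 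The strict area estimate will exploit the hypothesis $\apar_3\geq 3\apar_2$ via a direct comparison with $3\pi\apar_1\apar_2=3\hsd^2(\solD_3)$. It is essential that the sweepout represent a nontrivial class in the relevant equivariant homotopy group, relative to the trivial constant sweepout at $\solD_3$.

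Second, I would invoke the equivariant free boundary min-max theorem from the references cited in the introduction, obtaining an embedded, $\dih_1$-equivariant free boundary minimal surface $\solS\subset M_\apar$ with area equal to the width $W<3\hsd^2(\solD_3)$ and equivariant Morse index at most $1$. The containment $\xi_1\subset\solS$ follows because every slice contains $\xi_1$: passing to a diagonal subsequence converging smoothly to $\solS$ away from finitely many singular points, the $\dih_1$-symmetry and a removable-singularity argument along $\xi_1$ identify $\xi_1$ with a smooth arc of $\solS$. For the remaining assertions, note that the only $\dih_1$-equivariant planar free boundary minimal discs in $M_\apar$ containing $\xi_1$ are $\solD_2$ and $\solD_3$; the former has area $\pi\apar_1\apar_3\geq 3\hsd^2(\solD_3)>W$, so $\solS\neq\solD_2$, while the latter has equivariant index zero, incompatible with the $1$-parameter min-max, so $\solS\neq\solD_3$. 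The strict lower bound $W>\hsd^2(\solD_3)$ will follow from the same index comparison, since otherwise the limit would concentrate on $\solD_3$ with multiplicity one. The disc topology is then deduced from a Gauss--Bonnet/Euler characteristic computation combining the index bound with the hypothesis $\apar_3>\apar_1$, which excludes degeneration to annuli. Finally, the equivariant Morse index equals exactly $1$ by matching the $1$-parameter construction with the upper bound from the regularity theorem.

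The principal obstacle will be Step 1: designing a $\dih_1$-equivariant sweepout whose every slice genuinely contains $\xi_1$ and whose maximum slice area is strictly below $3\hsd^2(\solD_3)$. The quantitative area bound is precisely where the hypothesis $\apar_3\geq 3\apar_2$ must be used in an essential way. A secondary difficulty is ensuring that $\xi_1$ is actually contained in the limit surface, since this pointwise condition is not automatically preserved under varifold convergence; the argument will require combining smooth convergence of the sweepout with the $\dih_1$-symmetry and regularity along the axis.
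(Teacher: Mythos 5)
Your overall framework---an equivariant one-parameter min-max scheme with slices containing $\xi_1$, followed by area and topology arguments---matches the paper's in spirit, but several concrete steps diverge from the paper's proof in ways that introduce genuine gaps.

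First, the proposed sweepout is materially different from what the paper uses, and its area bound is not justified. You describe each slice as a doubled half-disc hinged at $\xi_1$ with both endpoints ``collapsed onto $\xi_1$.'' A sweepout of this type naturally passes through (or close to) $\solD_2$, whose area equals $\pi\apar_1\apar_3\geq3\hsd^2(\solD_3)$ when $\apar_3\geq3\apar_2$; this would violate the required strict bound $\sup_t\hsd^2(\Sigma_t)<3\hsd^2(\solD_3)$. The paper's sweepout (Lemma~\ref{lem:sweepout-S}) instead keeps $\Sigma_0=\Sigma_1=\solD_3$ and uses three roughly horizontal discs punctured along the $x_2$-axis joined by thin ribbons; horizontality is precisely why the slices stay below $3\hsd^2(\solD_3)$.

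Second, you do not control the multiplicity $m$ of the varifold limit, which is essential. The paper obtains $\xi_1\subset\solS$ and deduces that $m$ is odd (the convergence picture at a one-dimensional piece of the singular axis forces odd multiplicity, cf.~\cite[\S\,7.3]{Ketover2016FBMS}). Combined with $m\hsd^2(\solS)<3\hsd^2(\solD_3)$ and $\hsd^2(\solS)\geq\hsd^2(\solD_3)$ (from Lemma~\ref{lem:equal-volumes} and the isoperimetric inequality Lemma~\ref{lem:isoperim}), this gives $m<3$ and hence $m=1$. Without this the area and nonplanarity conclusions do not follow. Relatedly, your proposal to rule out $\solD_3$ via ``equivariant index zero incompatible with the $1$-parameter min-max'' is not a valid argument: the min-max theorem provides an \emph{upper} bound on the equivariant index, and ruling out index-zero limits requires a separate width estimate. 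The paper instead proves $W_\Pi>\hsd^2(\solD_3)$ directly from the stability of the relative isoperimetric inequality (Lemmata~\ref{lem:isoperim-stability} and \ref{lem:width}), and then concludes $\hsd^2(\solS)=W_\Pi>\hsd^2(\solD_3)$, so $\solS\neq\solD_3$.

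Third, the identification of the topology of $\solS$ as a disc is not a Gauss--Bonnet argument, and the hypothesis $\apar_3>\apar_1$ plays no role in it. The paper deduces disc topology from the topological lower semicontinuity result \cite[Theorem~1.8]{FranzSchulz2023} applied to the min-max sequence of topological discs, together with the absence of closed minimal surfaces in a Euclidean domain; the hypothesis $\apar_3>\max\{\apar_1,\apar_2\}$ (which combines your two inequalities) is used only to make the isoperimetric width estimate available. Finally, your index argument should be completed with an explicit equivariant negative direction: on a nonplanar $\solS$ containing $\xi_1$, the restriction of $x_3$ is a nonzero $\dih_1$-equivariant function with negative second variation, giving the lower bound, while the min-max upper bound gives equivariant index at most $1$.
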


\begin{figure}%
\centering
\begin{ellipsoid}[apar=3,bpar=2,cpar=6]
\draw(0,0,0)node[scale=\unitscale,inner sep=0]{\includegraphics[width={12cm}]{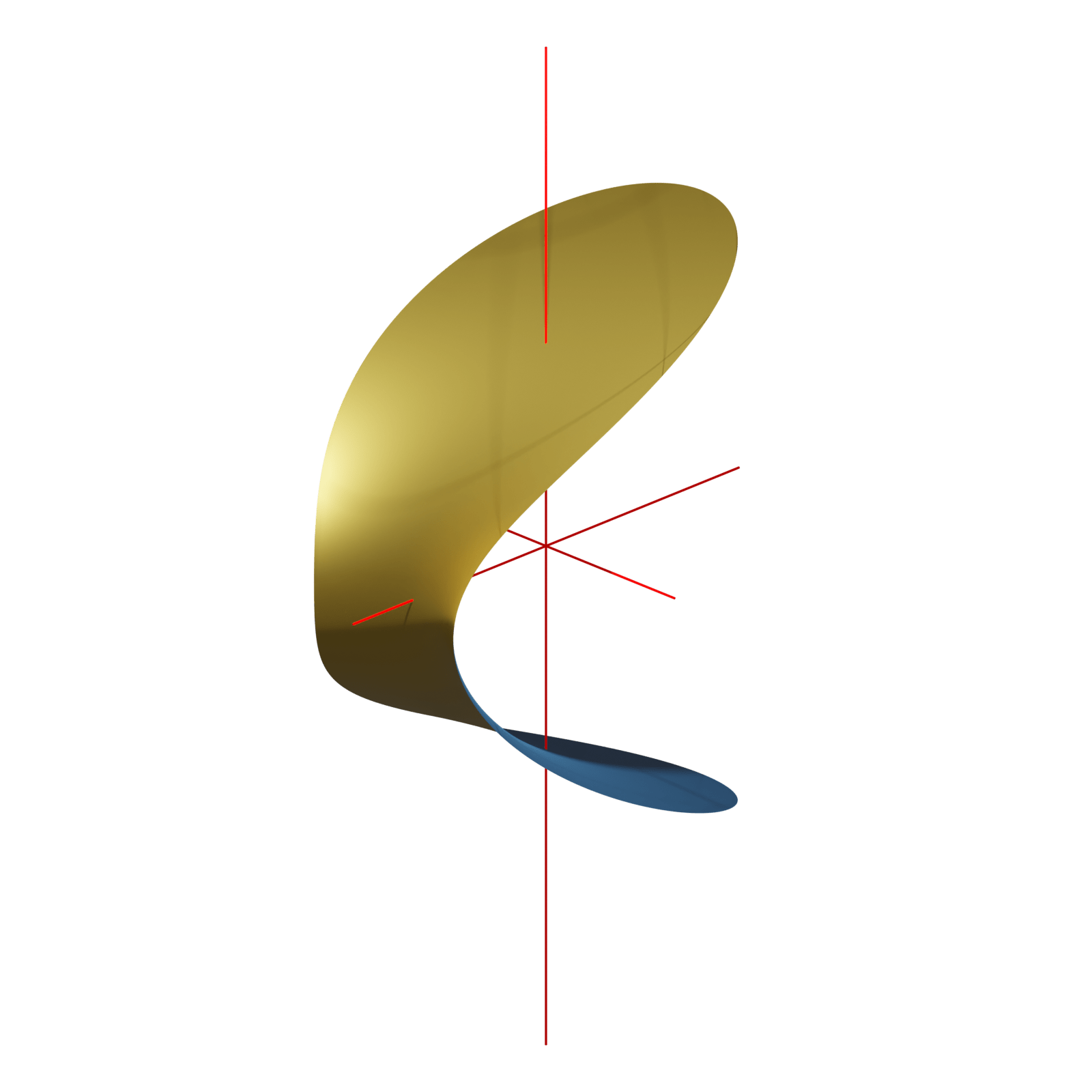}};
\begin{scope}[axis]
\draw[->](\apar,0,0)--(5,0,0)node[below right={1ex and 0},inner sep=0]{$x_1$};
\draw[->](0,\bpar,0)--(0,5,0)node[below left={1ex and 0},inner sep=0]{$x_2$}; 
\end{scope}
\pgfresetboundingbox
\path(5,0,0)(0,5,0);
\end{ellipsoid}
\hfill
\begin{ellipsoid}[apar=3,bpar=2,cpar=6]
\draw(0,0,0)node[scale=\unitscale,inner sep=0]{\includegraphics[width={12cm}]{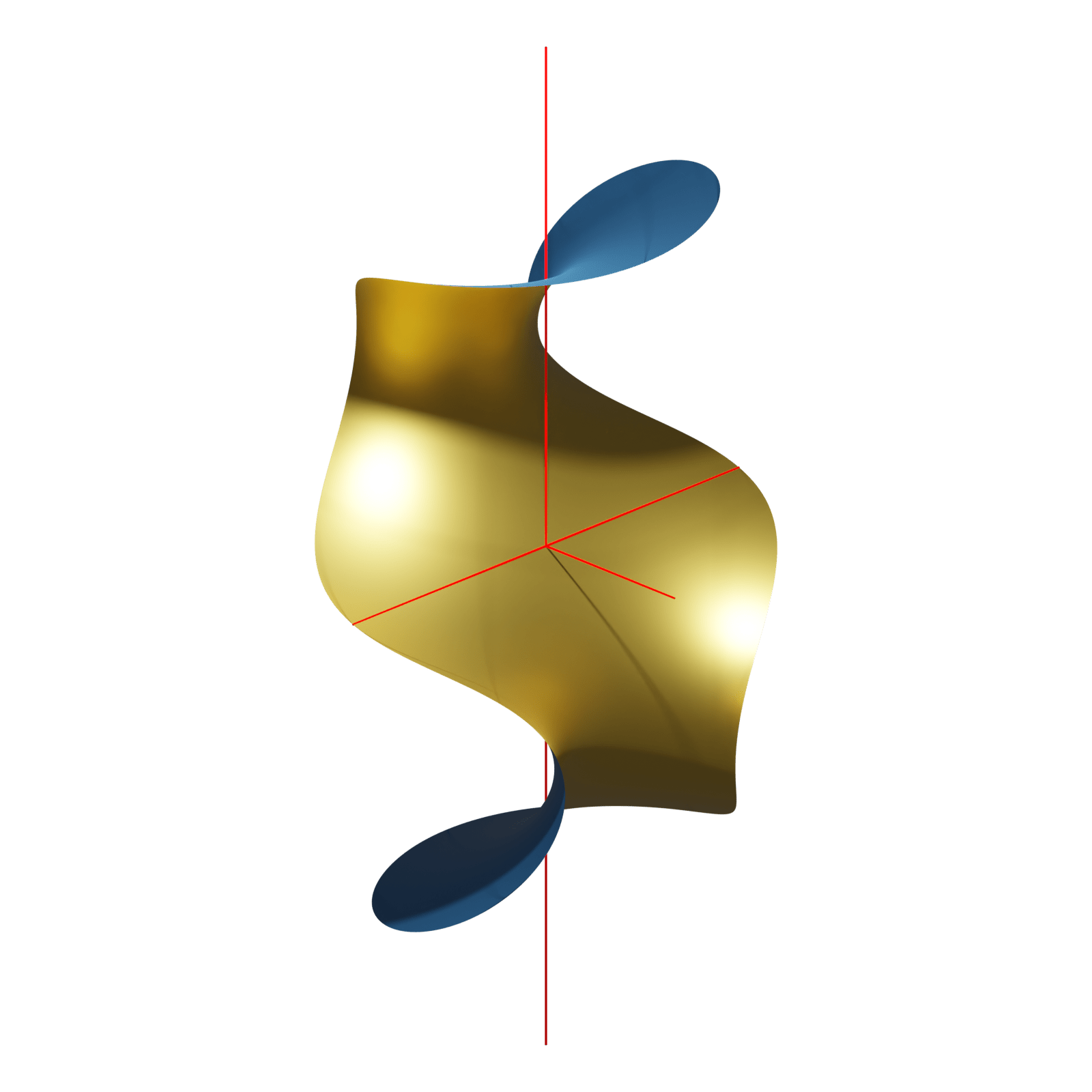}};
\begin{scope}[axis]
\draw[->](\apar,0,0)--(5,0,0)node[below right={1ex and 0},inner sep=0]{$x_1$};
\draw[->](0,\bpar,0)--(0,5,0)node[below left={1ex and 0},inner sep=0]{$x_2$}; 
\path[red!90!black](0,0,0)--(\apar,0,0)node[midway,above,inner sep=2pt]{$\xi_1$};
\end{scope}
\pgfresetboundingbox
\path(5,0,0)(0,5,0);
\end{ellipsoid} 
\caption{$\dih_1$-equivariant free boundary minimal discs $\solC$ and $\solS$ in $M_{\apar}$ for $\apar=(3,2,6)$.
}%
\label{fig:solC}%
\end{figure}

In \cite{CarlottoFranzSchulz2022}, the authors introduced for all $2\leq n\in\N$ an effective $\dih_n$-sweepout of the Euclidean unit ball $\B^3$ in order to construct embedded free boundary minimal surfaces in $\B^3$ with connected boundary and genus equal to $n-1$. 
The case $n=1$ would have been vacuous in that setting, because a solution in $\B^3$ with connected boundary and genus zero is necessarily a flat disc \cite{Nitsche1985}. 
The punchline of this article is that we can prove Theorem~\ref{thm:solS} by 
mimicking the sweepout construction in \cite[§\,2]{CarlottoFranzSchulz2022} for $n=1$ (i.\,e.~genus $g=0$) in $M_\apar$, where the resulting min-max free boundary minimal surface is not necessarily a flat disc.  
Similarly, Theorem~\ref{thm:solC} can be proved by emulating the sweepout from \cite[§\,5]{FranzSchulz2023} for $n=1$ in $M_\apar$. 
We outline these constructions in section~\ref{sec:sweepout}, establish the required min-max width estimate in section~\ref{sec:width} and prove the main theorems in section~\ref{sec:control}.  
In general, the ellipsoid $M_\apar$ is not equivariant with respect to any dihedral group $\dih_n$ with $n>2$ but for $n=2$, it is natural to expect that a $\dih_2$-equivariant min-max approach could establish the existence of free boundary minimal surfaces in $M_\apar$ with nontrivial topology. 
We investigate this idea in Theorem \ref{thm:annulus} below.  

Maximo, Nunes and Smith \cite[Theorem~1.1]{MaximoNunesSmith2017} showed that any compact, strictly convex domain $K\subset\R^3$ contains at least one embedded free boundary minimal annulus.   
In the case where the domain $K$ is an arbitrary ellipsoid, we improve this result by proving the  existence of at least three embedded, $\dih_2$-equivariant free boundary minimal annuli:  

\begin{theorem}\label{thm:annulus}
For any $\alpha\in\interval{0,\infty}^3$ the ellipsoid $M_\apar$ contains three embedded, $\dih_2$-equivariant free boundary minimal annuli $\solA_1,\solA_2,\solA_3$ with the following properties for every $\iota\in\{1,2,3\}$. 
\begin{enumerate}[label={\normalfont(\roman*)},nosep]
\item\label{thm:annulus-i}    $\solA_\iota$ has area $\hsd^2(\solA_\iota)<2\min_{\ell\in\{1,2,3\}}\hsd^2(\solD_\ell)$.  
\item\label{thm:annulus-ii}   $\solA_\iota$ is disjoint from the segment $\xi_\iota$ and intersects the other two segments orthogonally.  
\item\label{thm:annulus-iii}  $\solA_\iota$ has equivariant index equal to $1$. 
\end{enumerate} 
\end{theorem}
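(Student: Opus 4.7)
The plan is to produce each of the three annuli $\solA_\iota$, $\iota\in\{1,2,3\}$, by a separate $\dih_2$-equivariant one-parameter min-max procedure, in the same spirit as the constructions underlying Theorems~\ref{thm:solC} and~\ref{thm:solS}. For fixed $\iota$ and $\{j,k\}=\{1,2,3\}\setminus\{\iota\}$, I would build a $\dih_2$-equivariant sweepout of $M_\apar$ whose generic slice is an embedded topological annulus disjoint from $\xi_\iota$ and meeting each of $\xi_j,\xi_k$ transversely in exactly two points. A convenient starting point is the cylindrical family
\begin{align*}
\Sigma^\iota_t \;\vcentcolon=\; M_\apar \cap \bigl\{(x_j/\apar_j)^2+(x_k/\apar_k)^2=t^2\bigr\},\qquad t\in[0,1],
\end{align*}
which degenerates to $\xi_\iota$ at $t=0$ and to $\partial\solD_\iota$ at $t=1$; the endpoints are smoothed as in \cite{CarlottoFranzSchulz2022} to produce an admissible equivariant sweepout.

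The central quantitative task is the strict width estimate
\begin{align*}
\sup_t\hsd^2(\Sigma^\iota_t)\;<\;2\min_{\ell\in\{1,2,3\}}\hsd^2(\solD_\ell).
\end{align*}
A direct integration shows that the naive cylindrical family does not always satisfy this bound: for instance, when $M_\apar$ is elongated along $x_\iota$, the cylindrical maximum can exceed $2\min_\ell\hsd^2(\solD_\ell)$, so in general one must replace the family by a $\dih_2$-equivariant isotopic one whose critical slice is closer in shape to a catenoidal annulus wrapping around $\xi_\iota$. The strict inequality should then follow from the observation that the optimal equivariant annulus joining the two polar caps of $M_\apar$ along $\xi_\iota$ has strictly smaller area than two parallel copies of the smallest planar disc $\solD_\ell$, by a concavity argument analogous to the one used in the Euclidean ball setting. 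Verifying this width bound for all $\apar\in\interval{0,\infty}^3$ is the main technical obstacle.

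With the width bound in hand, the equivariant free boundary Almgren--Pitts min-max theorem developed in \cite{CarlottoFranzSchulz2022,FranzSchulz2023} (building on the adaptations of Ketover \cite{Ketover2016FBMS}) produces an embedded $\dih_2$-equivariant free boundary minimal surface $\solA_\iota\subset M_\apar$ of area equal to the width and of equivariant Morse index equal to one, which establishes assertions~\ref{thm:annulus-i} and~\ref{thm:annulus-iii}. The area bound excludes every integer multiple of a planar disc $\solD_\ell$ as a possible limit, so $\solA_\iota$ is nonplanar. The genus and multiplicity control for one-parameter equivariant min-max limits, combined with the annular topology and the $\dih_2$-invariance of the slices, then forces $\solA_\iota$ to be a single embedded annulus disjoint from $\xi_\iota$ and meeting each of $\xi_j,\xi_k$ in two points; the orthogonality follows from the $\rotation_j$- and $\rotation_k$-symmetries of the configuration. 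This proves~\ref{thm:annulus-ii}. The three annuli obtained by running the procedure for $\iota=1,2,3$ are pairwise distinct because their intersection patterns with the coordinate segments $\xi_1,\xi_2,\xi_3$ are pairwise different.
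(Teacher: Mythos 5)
Your high-level plan---three separate $\dih_2$-equivariant one-parameter sweepouts, one for each $\iota$, followed by equivariant min-max---is indeed the route the paper takes, but two of the steps you gesture at are precisely the nontrivial points, and on both your proposal falls short.

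First, the sweepout and the width estimate. Your cylindrical family $\Sigma^\iota_t = M_\apar\cap\{(x_j/\apar_j)^2+(x_k/\apar_k)^2=t^2\}$ is not the paper's sweepout, and as you yourself note, it does not satisfy the required area bound $\sup_t\hsd^2(\Sigma^\iota_t)<2\min_\ell\hsd^2(\solD_\ell)$ for all $\apar$. The fix you sketch (``replace the family by a $\dih_2$-equivariant isotopic one whose critical slice is closer in shape to a catenoidal annulus'') is not carried out, and it is unclear it would work. The paper avoids this difficulty entirely: after a harmless coordinate permutation one may assume $\apar_3>\max\{\apar_1,\apar_2\}$, so that $\solD_3$ realises the minimum, and then Lemma~\ref{lem:sweepout-annulus} builds all three sweepouts from the single disc $\solD_3$ (with one or two small balls removed) as a translated-and-rescaled family joined by thin ribbons or a central tube; the coarea estimate for the ribbon/tube area and the shrinking factor $\sqrt{1-t^2}$ then give the bound $<2\hsd^2(\solD_3)$ uniformly in $\apar$, with no case analysis. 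You also omit the separate treatment of the degenerate case $\apar_1=\apar_2=\apar_3$ (round ball), for which the paper simply quotes the critical catenoid.

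Second, and more seriously, you do not establish the topology and the intersection pattern of the limit. Saying that ``genus and multiplicity control combined with the annular topology and $\dih_2$-invariance of the slices forces $\solA_\iota$ to be a single embedded annulus disjoint from $\xi_\iota$'' is an assertion, not an argument. The paper's proof here has three distinct ingredients, none of which you supply: (a) The topological lower semicontinuity of \cite{FranzSchulz2023} gives genus zero and at most two boundary components, but does not by itself exclude a disc limit; the paper rules out the disc case by showing (Corollary~\ref{cor:D2disc}) that any $\dih_2$-equivariant disc must contain two coordinate segments, forcing even multiplicity $m\geq 2$ by the parity theorem of Ketover, which then contradicts the area window via Lemma~\ref{lem:equal-volumes} and the isoperimetric inequality. (b) Even after the limit is an annulus, Lemma~\ref{lem:D2annulus} only shows it is disjoint from \emph{one} of the three segments---it does not say which one. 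Identifying that segment as $\xi_\iota$ requires the surgery argument of \cite[Theorem~4.11]{FranzSchulz2023}: one tracks how the $\rotation_\iota$-action permutes the boundary components of the min-max slices through surgery operations of types (1)--(4), and shows that a limit intersecting $\xi_\iota$ would force a type-(4) half-neck surgery, which would destroy the annular boundary complexity. (c) Without (a) and (b) you also have no multiplicity-one statement. Until these steps are filled in, the proposal does not constitute a proof of either~\ref{thm:annulus-i} or~\ref{thm:annulus-ii}.
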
 

\begin{figure}%
\begin{ellipsoid}[apar=2,bpar=4,cpar=5]
\draw(0,0,0)node[scale=\unitscale]{\includegraphics[width={12cm}]{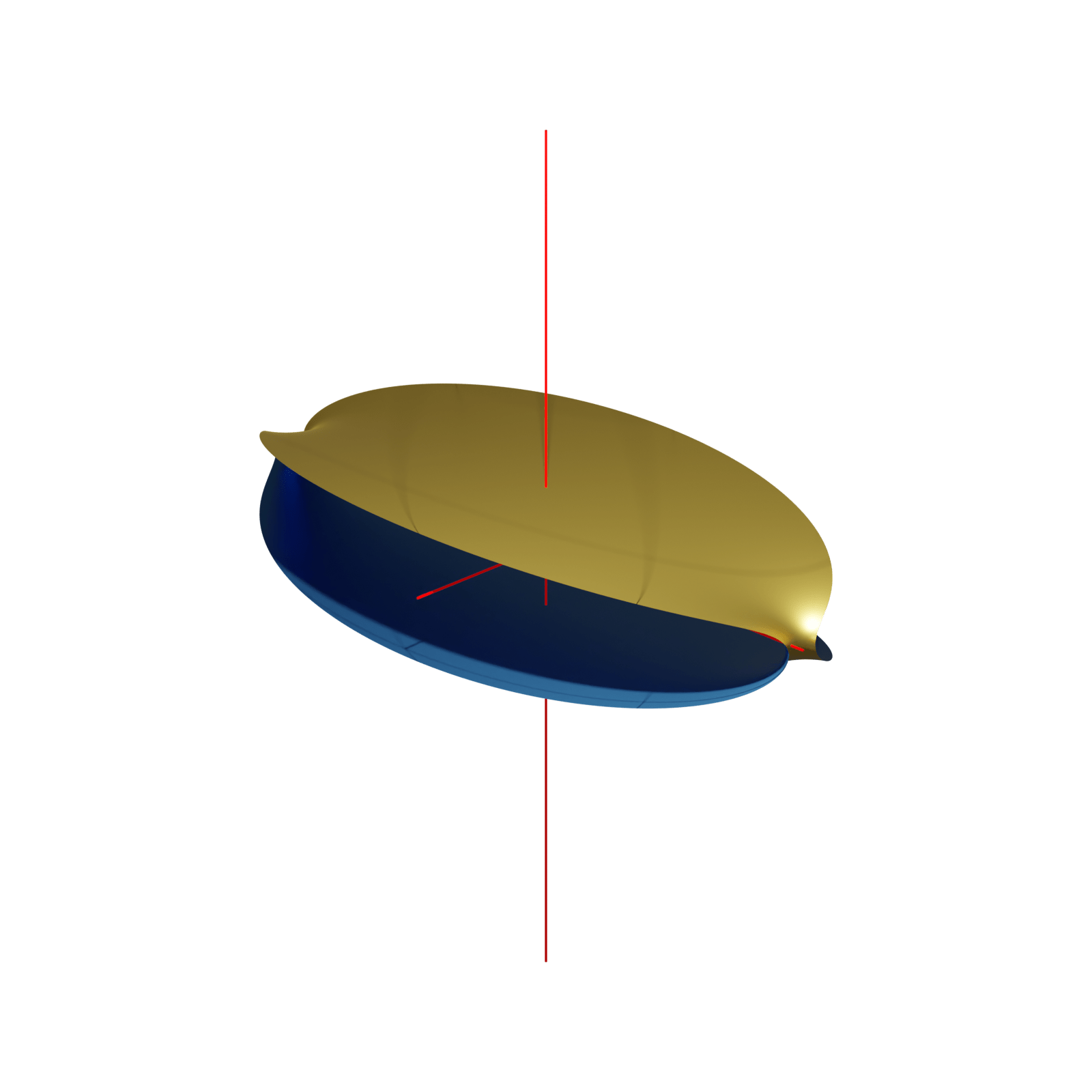}};
\begin{scope}[axis]
\draw[->](\apar,0,0)--(5,0,0)node[below right={1ex and 0},inner sep=0]{$x_1$};
\draw[->](0,\bpar,0)--(0,5,0)node[below]{$x_2$};
\end{scope}
\pgfresetboundingbox
\path(5,0,0)(0,5,0);
\end{ellipsoid}
\hfill
\begin{ellipsoid}[apar=2,bpar=4,cpar=5]
\draw(0,0,0)node[scale=\unitscale]{\includegraphics[width={12cm}]{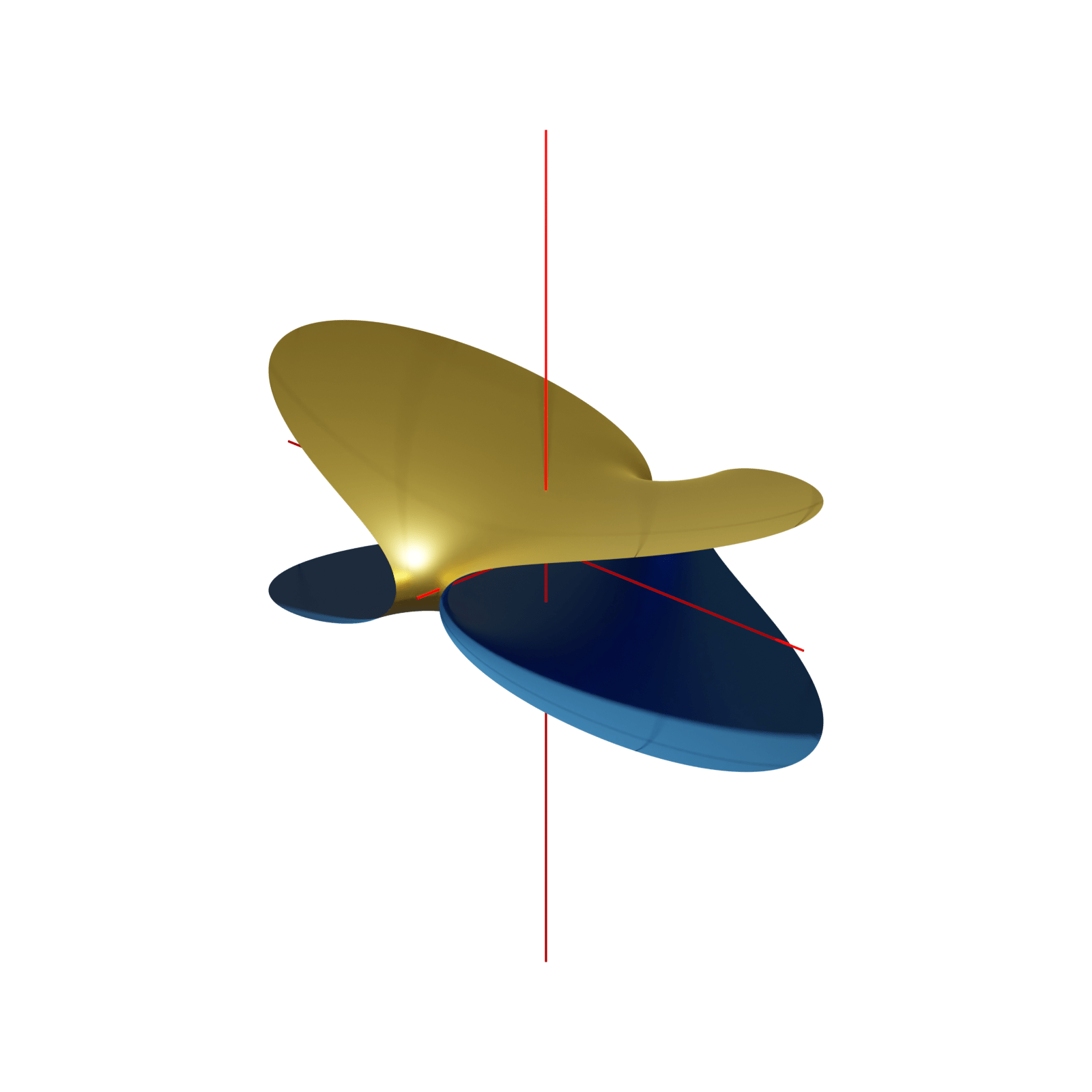}};
\begin{scope}[axis]
\draw[->](\apar,0,0)--(5,0,0)node[below right={1ex and 0},inner sep=0]{$x_1$};
\draw[->](0,\bpar,0)--(0,5,0)node[below]{$x_2$};
\end{scope}
\pgfresetboundingbox
\path(5,0,0)(0,5,0);
\end{ellipsoid}~

\bigskip\bigskip

\begin{ellipsoid}[apar=2,bpar=4,cpar=5]
\draw(0,0,0)node[scale=\unitscale]{\includegraphics[width={12cm}]{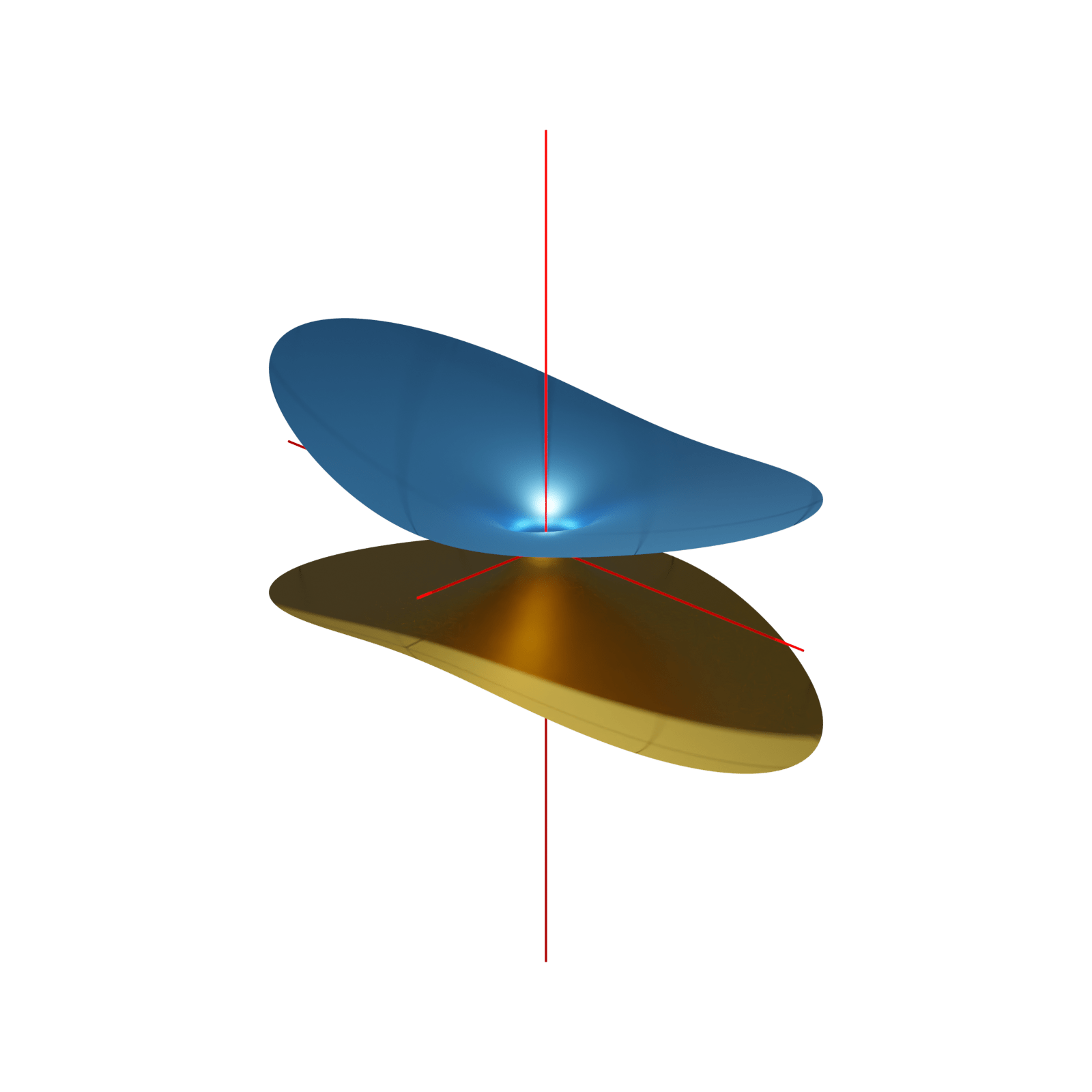}};
\begin{scope}[axis]
\draw[->](\apar,0,0)--(5,0,0)node[below right={1ex and 0},inner sep=0]{$x_1$};
\draw[->](0,\bpar,0)--(0,5,0)node[below]{$x_2$};
\end{scope}
\pgfresetboundingbox
\path(5,0,0)(0,5,0);
\end{ellipsoid}
\hfill
\begin{ellipsoid}[apar=2,bpar=4,cpar=5]
\draw(0,0,0)node[scale=\unitscale]{\includegraphics[width={12cm}]{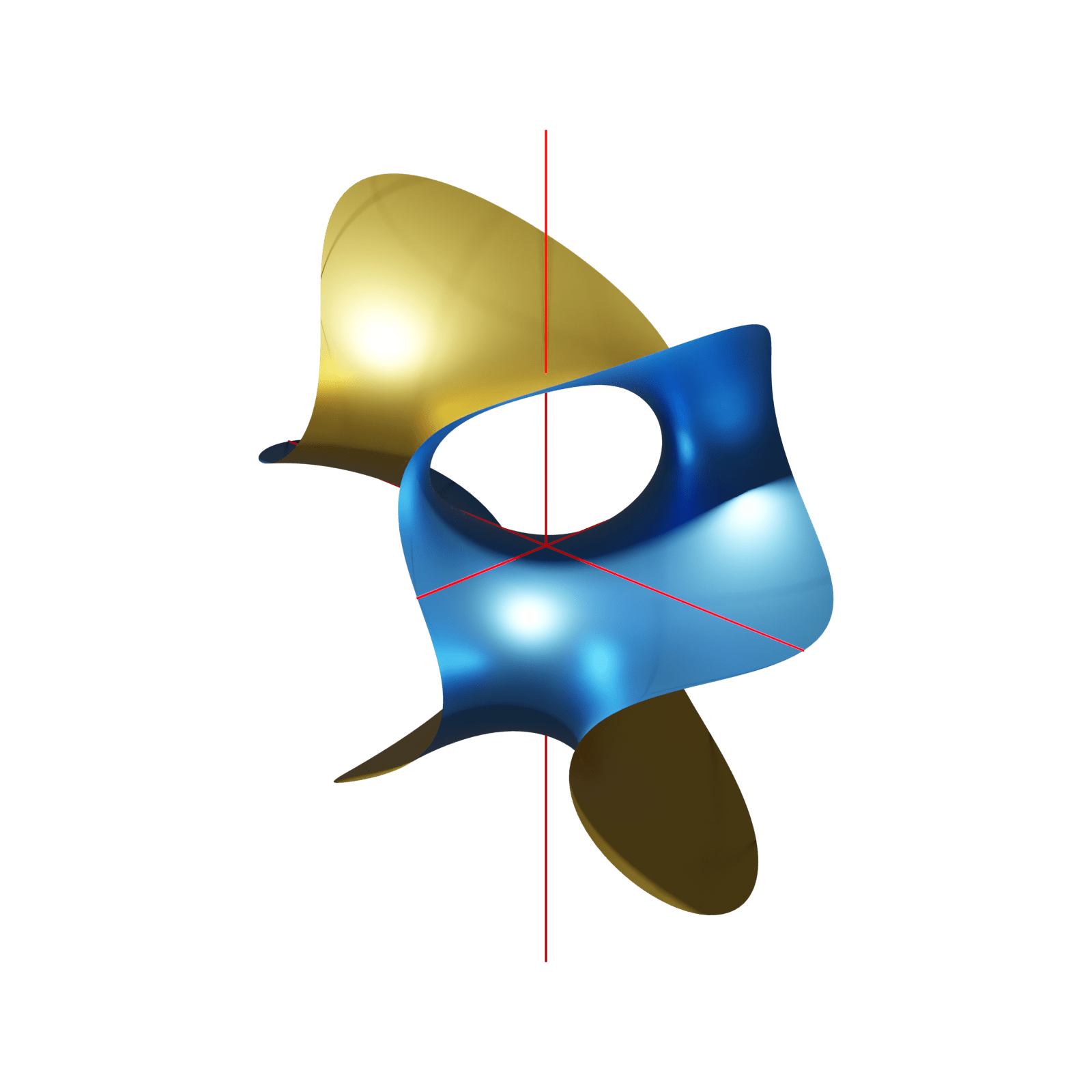}};
\begin{scope}[axis]
\draw[->](\apar,0,0)--(5,0,0)node[below right={1ex and 0},inner sep=0]{$x_1$};
\draw[->](0,\bpar,0)--(0,5,0)node[below]{$x_2$};
\end{scope}
\pgfresetboundingbox
\path(5,0,0)(0,5,0);
\end{ellipsoid}~
\caption{Three $\dih_2$-equivariant free boundary minimal annuli in $M_\apar$ for $\apar=(2,4,5)$ and 
a conjectural, $\dih_2$-equivariant solution with genus one and connected boundary.}%
\label{fig:annulus}%
\vspace*{-2.75ex}
\end{figure}

Property \ref{thm:annulus-ii} ensures that the three solutions $\solA_1,\solA_2,\solA_3$ are indeed distinct, and in fact noncongruent if the parameters $\apar_1,\apar_2,\apar_3$ are all different, because there is no ambient isometry interchanging segments $\xi_\iota$ of different length. 
(See Lemma~\ref{lem:D2annulus} about the general structure of $\dih_2$-equivariant annuli in $M_\apar$.) 
Simulations of the free boundary minimal annuli from Theorem~\ref{thm:annulus} are visualised in the first three images of Figure~\ref{fig:annulus}. 
In analogy with the existence result \cite[Theorem~1.1 for $g=1$]{CarlottoFranzSchulz2022} in $\B^3$, we conjecture that every ellipsoid also contains an embedded, $\dih_2$-equivariant free boundary minimal surface with genus one and connected boundary 
(cf.~Remark~\ref{rem:sweepout-g1b1} and Figure \ref{fig:annulus}, last image). 


When employing min-max methods to construct solutions with nontrivial topology, it is essential to prove that the limit of the resulting min-max sequence is not just a topological disc. 
In \cite{FranzSchulz2023,CarlottoFranzSchulz2022}, this step relies on Nitsche's \cite{Nitsche1985} uniqueness result for free boundary minimal discs in $\B^3$.  
However, it is evident that this argument does not generalise to ellipsoids. 
In fact, Theorem~\ref{thm:solU} stated below provides a counterexample even in the class of $\dih_2$-equivariant solutions, reaffirming the dramatic nonuniquenss of free boundary minimal discs in $M_\apar$.  
Instead, our proof of Theorem~\ref{thm:annulus} must rely on a much weaker property of $\dih_2$-equivariant discs in $M_\apar$ (cf.~Corollary~\ref{cor:D2disc}). 

\begin{theorem}\label{thm:solU}
For any $\alpha\in\interval{0,\infty}^3$ satisfying $\apar_3<\apar_1\apar_2/(\apar_1+\apar_2)$ the ellipsoid $M_\apar$ contains a $\dih_2$-equivariant free boundary minimal disc $\solU$ with with the following properties.  
\begin{enumerate}[label={\normalfont(\roman*)},nosep]
\item\label{thm:solU-i}   $\solU$ is nonplanar with area $\hsd^2(\solU)<\hsd^2(\solD_3)$.  
\item\label{thm:solU-ii}  $\solU$ contains the segments $\xi_1\cup\xi_2$. 
\item\label{thm:solU-iii} $\solU$ is equivariantly stable. 
\end{enumerate} 
\end{theorem}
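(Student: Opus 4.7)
The hypothesis $\apar_3<\apar_1\apar_2/(\apar_1+\apar_2)$ is equivalent to the strict area inequality
$\hsd^2(\solD_1)+\hsd^2(\solD_2)=\pi\apar_3(\apar_1+\apar_2)<\pi\apar_1\apar_2=\hsd^2(\solD_3)$,
which suggests that inside a sufficiently flat ellipsoid the cross $\xi_1\cup\xi_2$ can be spanned more efficiently by a saddle-shaped surface than by the equatorial disc $\solD_3$. The plan is to realise $\solU$ as an area-minimiser in the class $\mathcal{F}$ of $\dih_2$-equivariant, embedded topological discs in $M_\apar$ with boundary on $\partial M_\apar$ whose interior contains $\xi_1\cup\xi_2$.

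First I would exhibit an explicit competitor in $\mathcal{F}$ of area strictly below $\hsd^2(\solD_3)$. For $t\geq 0$ consider the saddle graphs
$\Sigma_t\vcentcolon=\{(x_1,x_2,tx_1x_2):(x_1,x_2)\in D_t\}$,
where $D_t=\{(x_1,x_2)\in\R^2:(x_1/\apar_1)^2+(x_2/\apar_2)^2+(tx_1x_2/\apar_3)^2\leq 1\}$ is precisely the planar region over which the graph lies in $M_\apar$. Since the height function $tx_1x_2$ is odd in each variable, $\Sigma_t$ is automatically $\dih_2$-equivariant, embedded, and---once one verifies that $D_t$ is simply connected---a topological disc; moreover $\xi_1\cup\xi_2$ lies in its interior because the height vanishes along both coordinate axes. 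An asymptotic analysis of $\hsd^2(\Sigma_t)=\int_{D_t}\sqrt{1+t^2(x_1^2+x_2^2)}\,dx_1\,dx_2$ as $t\to\infty$, during which $D_t$ degenerates into narrow ``fins'' of width of order $\apar_3/(t|x_1|)$ around $\xi_1$ and $\apar_3/(t|x_2|)$ around $\xi_2$, yields $\hsd^2(\Sigma_t)\to\pi\apar_3(\apar_1+\apar_2)$. Under the hypothesis this limit is strictly below $\hsd^2(\solD_3)$, so $\Sigma_t\in\mathcal{F}$ has area below $\hsd^2(\solD_3)$ for all sufficiently large~$t$.

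Next I would establish existence of a minimiser via a parametric Plateau--Courant scheme with symmetry: minimise Dirichlet energy over $\dih_2$-equivariant maps $u\colon D^2\to M_\apar$ which send the two coordinate diameters of $D^2$ pointwise into $\xi_1$ and $\xi_2$ and satisfy $u(\partial D^2)\subset\partial M_\apar$. Standard compactness and lower semicontinuity produce a harmonic minimiser which, after conformal reparametrisation, is a branched minimal immersion with free boundary; interior and free-boundary regularity together with branch-point exclusion compatible with the $\dih_2$-symmetry deliver a smooth, embedded free boundary minimal disc $\solU\in\mathcal{F}$. Property~(i) follows because $\hsd^2(\solU)<\hsd^2(\solD_3)$ rules out $\solU=\solD_3$, and $\solD_3$ is the unique planar disc in $M_\apar$ containing $\xi_1\cup\xi_2$; property~(ii) is built into the variational class; property~(iii) is an immediate consequence of the minimising property within the $\dih_2$-equivariant class.

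The main technical obstacle is preservation of the disc topology in the limit. \emph{A priori}, a minimising sequence could converge as varifolds to the singular configuration $\solD_1\cup\solD_2$, which realises the mass $\pi\apar_3(\apar_1+\apar_2)$ but is not a smooth embedded disc. Parametrising by the fixed domain $D^2$ should enforce the correct topology on the limit, while a local $\dih_2$-equivariant desingularisation near $\xi_3$ (smoothing the right-angle crease into a minimal saddle) strictly lowers area and therefore shows that $\solD_1\cup\solD_2$ is not a minimum within the admissible class. Hence the infimum of $\hsd^2$ over $\mathcal{F}$ should be strictly smaller than $\pi\apar_3(\apar_1+\apar_2)$ and realised by a genuine smooth embedded disc.
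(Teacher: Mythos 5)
Your proposal rests on the same fundamental observation as the paper --- the hypothesis on $\apar$ is exactly the area inequality $\hsd^2(\solD_1)+\hsd^2(\solD_2)<\hsd^2(\solD_3)$, and the idea is to obtain $\solU$ by minimizing area among $\dih_2$-equivariant discs containing $\xi_1\cup\xi_2$ --- but your implementation has a genuine gap and is considerably more involved than necessary.

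The paper never minimizes in the equivariant class. It works on a quarter of the problem: with $\xi_1^+\cup\xi_2^+$ the Jordan arc from $(\apar_1,0,0)$ through the origin to $(0,\apar_2,0)$, one invokes the standard existence theorem \cite[\S\,4.6, Theorem~2]{DHKW1992} for an area-minimising disc $\Sigma$ with fixed boundary on $\xi_1^+\cup\xi_2^+$ and partially free boundary on $\partial M_\apar$. The piecewise-planar competitor $\Gamma=(\solD_1\cup\solD_2)\cap\{x_1,x_2,x_3\geq0\}$ then gives $\hsd^2(\Sigma)\leq\hsd^2(\Gamma)=\tfrac14\pi\apar_3(\apar_1+\apar_2)<\tfrac14\hsd^2(\solD_3)$, and Schwarz reflection across $\xi_1$ and $\xi_2$ assembles $\solU=\Sigma\cup\rotation_1\Sigma\cup\rotation_2\Sigma\cup\rotation_1\rotation_2\Sigma$, a $\dih_2$-equivariant free boundary minimal disc containing $\xi_1\cup\xi_2$ of area $4\hsd^2(\Sigma)<\hsd^2(\solD_3)$. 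No equivariance constraint enters the variational problem; disc topology, regularity, and the free boundary condition all come for free from a citable Plateau theorem plus reflection, and equivariant stability is immediate because $\solU$ is equivariantly area-minimising.

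By contrast, you set up a nonstandard constrained Courant scheme with equivariance and internal fixed-boundary conditions built in, and you explicitly admit that the key technical step --- ruling out degeneration to the singular configuration $\solD_1\cup\solD_2$ and showing the limit is a smooth, embedded free boundary disc --- is unresolved. The proposed local equivariant desingularisation of the crease along $\xi_3$, and the appeal to branch-point exclusion ``compatible with the $\dih_2$-symmetry,'' are heuristics, not arguments; free boundary regularity and embeddedness for a $\dih_2$-constrained parametric minimiser are genuinely delicate. Moreover you insist on embeddedness, which the theorem does not claim, and your explicit saddle-graph competitors $\Sigma_t$, while a nice observation, are unnecessary: the degenerate comparison surface $\Gamma$ already gives the bound once one minimises in the partially-free-boundary Plateau framework. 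The lesson is that reducing to a fundamental domain and reflecting removes every obstacle your approach runs into.
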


\begin{proof} 
Let $\xi_1^+\vcentcolon=M_\apar\cap\{x_1\geq0=x_2=x_3\}$ and $\xi_2^+\vcentcolon=M_\apar\cap\{x_2\geq0=x_1=x_3\}$. 
Then $\xi_1^+\cup\xi_2^+$ is a Jordan curve in $\R^3$ with endpoints on $\partial M_\apar$. 
By \cite[§\,4.6, Theorem~2]{DHKW1992} there exists an area-minimising disc $\Sigma$ spanning $\xi_1^+\cup\xi_2^+$ with partially free boundary on $\partial M_\apar$. 
The assumption on $\apar$ implies that the competitor $\Gamma=(\solD_1\cup\solD_2)\cap\{x_1,x_2,x_3\geq0\}$ has area 
\(\hsd^2(\Gamma)=\tfrac{1}{4}\pi(\apar_2\apar_3+\apar_1\apar_3)
<\tfrac{1}{4}\pi\apar_1\apar_2=\tfrac{1}{4}\hsd^2(\solD_3)\). 
Being minimising, $\hsd^2(\Sigma)\leq\hsd^2(\Gamma)$. 
By the Schwarz reflection principle, the surface 
\[
\solU=\Sigma\cup(\rotation_1\Sigma)\cup(\rotation_2\Sigma)\cup(\rotation_1\rotation_2\Sigma)
\] 
is a $\dih_2$-equivariant free boundary minimal disc with area 
$\hsd^2(\solU)=4\hsd^2(\Sigma)<\hsd^2(\solD_3)$ containing $\xi_1\cup\xi_2$. 
In particular, $\solU$ must be nonplanar (see Figure~\ref{fig:solU}). 
Being equivariantly area-minimising, $\solU$ is equivariantly stable. 
\end{proof} 

\begin{figure}%
\centering
\begin{ellipsoid}[apar=6,bpar=4,cpar=2]
\draw(0,0,0)node[scale=\unitscale]{\includegraphics[width={12cm}]{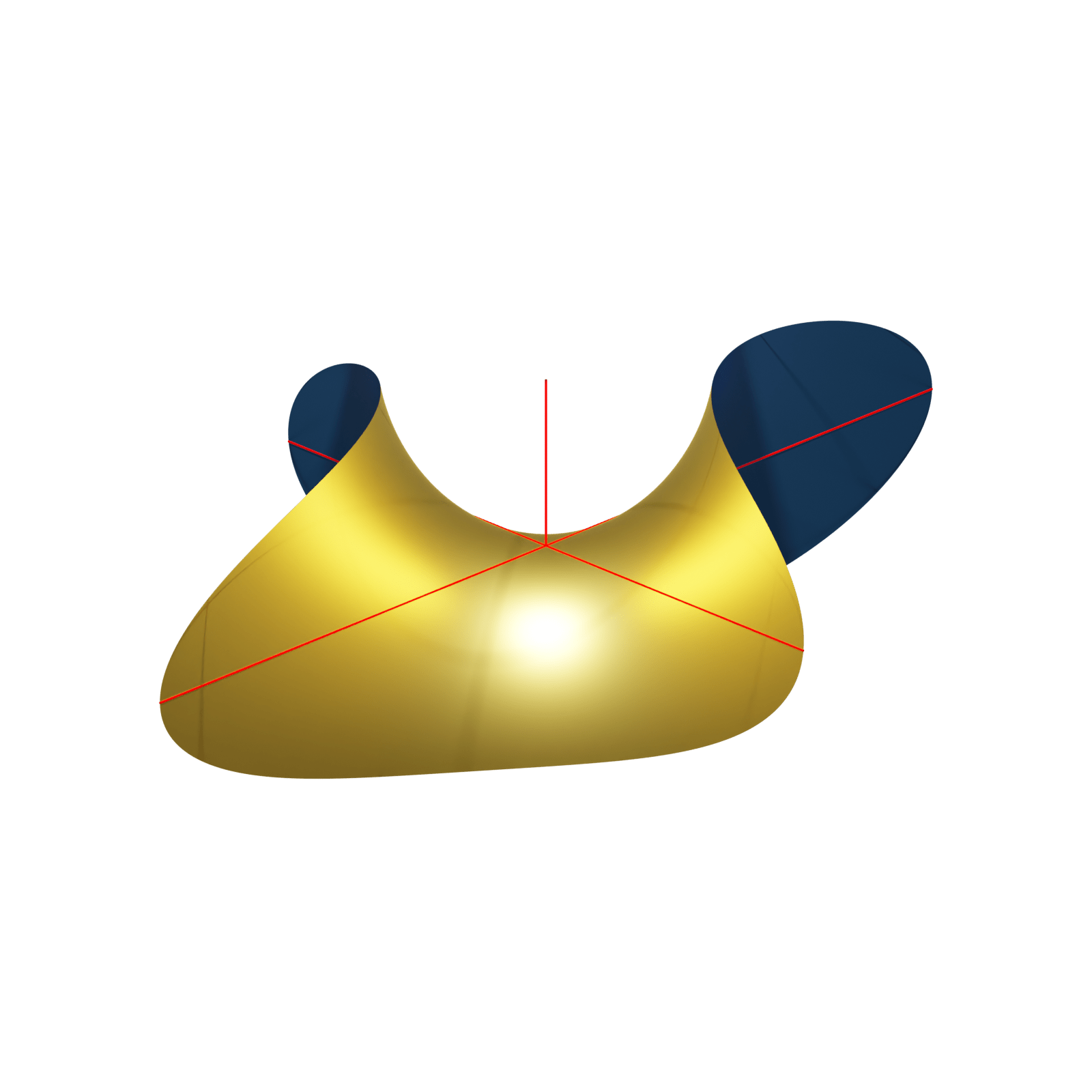}};
\pgfresetboundingbox
\begin{scope}[axis]
\draw[->](\apar,0,0)--(7,0,0)node[anchor=30,inner sep=1pt]{$x_1$};
\draw[->](0,\bpar,0)--(0,7,0)node[anchor=180-30,inner sep=1pt]{$x_2$};
\path[red!90!black](0,0,0)--(\apar,0,0)node[near end,below]{$\xi_1$};
\path[red!90!black](0,0,0)--(0,\bpar,0)node[near end,below left,inner sep=1pt]{$\xi_2$};
\end{scope}
\end{ellipsoid}
\caption{A nonplanar, $\dih_2$-equivariant free boundary minimal disc $\solU$ in $M_\apar$ for $\apar=(6,4,2)$.  }%
\label{fig:solU}%
\end{figure}

\begin{remark}
Lima and Menezes proved that free boundary minimal surfaces in the Euclidean unit ball $\B^3$ satisfy the 
\emph{two-piece property} \cite[Theorem~2]{LimaMenezes2021}, which states that the equatorial disc divides any compact, embedded free boundary minimal surface of $\B^3$ in exactly two connected components. 
A key step in their proof is based on \cite[Lemma~2]{LimaMenezes2021} stating that 
any immersed, stable, partially free boundary minimal surface in $\B^3$ with fixed boundary contained in the equatorial disc is necessarily totally geodesic.
This property does not generalise to ellipsoids and the surface constructed in Theorem \ref{thm:solU} is a counterexample. 
\end{remark}

\begin{remark}
The respective condition on $\apar$ in Theorems \ref{thm:solC}, \ref{thm:solS} and \ref{thm:solU} is not expected to be sharp but it ensures relevant area estimates. 
It is an interesting problem to determine the set of all $\apar\in\interval{0,\infty}^3$ for which the respective existence result remain true 
(cf.~\cite[Remark~2.11]{HaslhoferKetover}). 
\end{remark}
 
\paragraph{Acknowledgements.} 
The author would like to thank Giada Franz and Alessandro Carlotto for helpful comments and discussions. 
This project has received funding from the European Research Council (ERC) under the European Union's Horizon 2020 research and innovation programme (grant agreement No. 947923), 
and from the Deutsche Forschungsgemeinschaft (DFG, German Research Foundation) under Germany's Excellence Strategy EXC 2044 -- 390685587, Mathematics M\"unster: Dynamics--Geometry--Structure, and the Collaborative Research Centre CRC 1442, Geometry: Deformations and Rigidity.

\section{Sweepout construction}\label{sec:sweepout}

Given any $\alpha\in\interval{0,\infty}^3$ let $M_\apar$ be as in \eqref{eqn:ellipsoid} and $\solD_3=M_\apar\cap\{x_3=0\}$ as in \eqref{eqn:planar}. 
The notion of equivariant sweepout is defined e.\,g.~in \cite[Definition~1.1]{FranzSchulz2023}.

\begin{lemma}\label{lem:sweepout-C}
There exists a $\dih_1$-sweepout $\{\Sigma_t^{\solC}\}_{t\in[0,1]}$ of $M_{\apar}$ such that  
\begin{itemize}[nosep]
\item $\hsd^2(\Sigma_0^{\solC})=\hsd^2(\Sigma_1^{\solC})=0$ 
and $\hsd^2(\Sigma_t^{\solC})<2\hsd^2(\solD_3)$ for all $t\in\interval{0,1}$,
\item $\Sigma_t^{\solC}$ is a topological disc intersecting the $x_1$-axis orthogonally for all $t\in\interval{0,1}$.
\end{itemize}
\end{lemma}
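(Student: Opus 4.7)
The plan is to build the sweepout explicitly by adapting the $n=1$ case of the $\dih_n$-sweepout of $\B^3$ constructed in \cite[§\,5]{FranzSchulz2023} to the ellipsoid $M_\apar$. The natural first attempt is the affine family $\Sigma_t^{\textup{aff}} = \{x_1 = (2t-1)\apar_1\} \cap M_\apar$, which is $\dih_1$-equivariant (indeed $\dih_2$-equivariant), consists of topological discs degenerating to the endpoints of $\xi_1$ at $t\in\{0,1\}$, and each slice meets $\xi_1$ orthogonally at the single point $p_t = ((2t-1)\apar_1,0,0)$. However, the peak slice area $\hsd^2(\solD_1) = \pi\apar_2\apar_3$ exceeds $2\hsd^2(\solD_3) = 2\pi\apar_1\apar_2$ precisely when $\apar_3 > 2\apar_1$, so this family alone does not verify the area bound in general and must be modified in the middle range of $t$.

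For parameters $t$ where the affine slice is too large, I would replace it by a bent $\dih_1$-equivariant topological disc $\Sigma_t^{\textup{bent}}$ of area comparable to $\hsd^2(\solD_3)$: excise from $\solD_3$ a small $\rotation_1$-invariant neighborhood of $p_t$, attach a small disc contained in the perpendicular plane $\{x_1=(2t-1)\apar_1\}$ and centered at $p_t$, and connect the two pieces by a narrow $\dih_1$-symmetric collar. The resulting surface is a topological disc (annulus joined to disc through a cylinder), is $\dih_1$-equivariant, meets $\xi_1$ orthogonally only at $p_t$ (the tangent plane there being the perpendicular plane containing the attached small disc), and has area bounded by $\hsd^2(\solD_3) + O(\delta^2)$ for a parameter $\delta$ controlling the size of the excised neighborhood, which is strictly below $2\hsd^2(\solD_3)$ for $\delta$ small. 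The final family $\Sigma_t^\solC$ is then obtained from $\Sigma_t^{\textup{aff}}$ and $\Sigma_t^{\textup{bent}}$ by a continuous $\dih_1$-equivariant interpolation that uses the affine regime near $t\in\{0,1\}$ (where the areas vanish) and the bent regime in a neighborhood of $t=1/2$, for instance by varying the bending amplitude in a one-parameter family of admissible discs.

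The main obstacle is implementing this interpolation so that it is simultaneously continuous in $t$, $\dih_1$-equivariant, topologically correct (a topological disc at every step), orthogonal to $\xi_1$ at $p_t$ throughout, and keeps the area uniformly below $2\hsd^2(\solD_3)$. This is precisely the content of the $n=1$ case of the template in \cite[§\,5]{FranzSchulz2023}, and the same recipe applies here with the bent configuration and area estimate adapted to the anisotropic metric of $M_\apar$. Finally, the resulting family is a $\dih_1$-sweepout of $M_\apar$ in the sense of \cite[Definition 1.1]{FranzSchulz2023} because it is homotopic, within the space of $\dih_1$-equivariant families of topological discs meeting $\xi_1$ orthogonally, to the affine family, which is manifestly a nontrivial sweepout of $M_\apar$.
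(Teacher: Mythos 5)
Your construction is genuinely different from the paper's, and the critical step is deferred to a reference that does not actually cover it. The paper follows \cite[§\,5]{FranzSchulz2023} (and \cite[§\,2]{CarlottoFranzSchulz2022}) and builds $\Sigma_t$ as the boundary of a thin region swept by \emph{two} rescaled vertical translates $D_{\pm t,\varepsilon(t)}$ of $\solD_3\setminus B_{\varepsilon}(p_1)$, where $p_1=(\apar_1,0,0)$ is the \emph{fixed} endpoint of $\xi_1$ on $\partial M_\apar$; the surface is two stacked horizontal notched discs joined by a ribbon near $p_1$, so its area is automatically $\leq 2(1-t^2)\hsd^2(\solD_3)+O(\varepsilon_0 t)$ for all $t\in[t_0,1]$ and the catenoid estimate is only needed to contract the surface into a neighbourhood of $-p_1$ for $t\in[0,t_0]$. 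Your proposed family, by contrast, interpolates between affine vertical slices $\{x_1=(2t-1)\apar_1\}\cap M_\apar$ and a single copy of $\solD_3$ carrying a small perpendicular disc at the \emph{moving} interior point $p_t$. Nothing of this shape occurs in \cite[§\,5]{FranzSchulz2023}, so your claim that the interpolation ``is precisely the content of the $n=1$ case of the template'' there is inaccurate and cannot be used to discharge the obstacle you yourself identify as the main one.

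That obstacle is real and not a formality. The affine slice has area $\pi\apar_2\apar_3(1-(2t-1)^2)$, which exceeds $2\hsd^2(\solD_3)=2\pi\apar_1\apar_2$ on the whole interval $|2t-1|<\sqrt{1-2\apar_1/\apar_3}$; as $\apar_3/\apar_1\to\infty$ this interval covers almost all of $(0,1)$, forcing the transition to occur where the affine slice already has area arbitrarily close to the allowed bound $2\hsd^2(\solD_3)$, while the target bent configuration has area $\approx\hsd^2(\solD_3)$. There is no obvious $\dih_1$-equivariant path between these two surfaces keeping the area uniformly below $2\hsd^2(\solD_3)$: the intermediate surfaces must pass from being graphs over $\{x_1=\mathrm{const}\}$ to containing a large horizontal $\solD_3$-like sheet, and this ``flip'' carries with it a substantial change in the enclosed volume (from a thin lens to roughly half of $M_\apar$) for which you supply no area control. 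The catenoid estimate of \cite{KetoverMarquesNeves2020} bounds the cost of opening a thin neck, not the cost of reorienting a large disc, so it does not close this gap. I would recommend switching to the stacked-discs-with-ribbon template, which sidesteps the problematic regime entirely because the surface remains uniformly close to two parallel translates of $\solD_3$ throughout $[t_0,1]$.
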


\begin{proof}
We follow part (I) in the proof of \cite[Theorem 5.1]{FranzSchulz2023} and employ comparable notation.  
Let $B_\varepsilon(p_1)$ denote the open ball of radius $\varepsilon>0$ around the point $p_1=(\apar_1,0,0)\in\partial M_\apar$. 
Let $D_\varepsilon\vcentcolon=\solD_3\setminus B_\varepsilon(p_1)$
and $D_{t,\varepsilon}\vcentcolon=\sqrt{1-t^2}D_\varepsilon+(0,0,\apar_3t)$ for any $t\in\interval{-1,1}$. 
Then $D_{t,\varepsilon}\subset M_\apar$ and if $\varepsilon>0$ is sufficiently small, $D_{t,\varepsilon}$ is a topological disc. 
With $0<t_0<1$ and $\varepsilon_0>0$ to be chosen, let $\varepsilon\colon\Interval{t_0,1}\to\intervaL{0,\varepsilon_0}$ be a continuous function of $t$ such that $\varepsilon(t)\to0$ as $t\to1$ and define  
\begin{align}\label{eqn:20240611}
\Omega_{t}&\vcentcolon=\bigcup_{\tau\in[-t,t]}D_{\tau ,\varepsilon(t)}, & 
\Sigma_{t}&\vcentcolon=\overline{\partial\Omega_{t}\setminus\partial M_\apar}
\end{align}
as in \cite[(25)]{FranzSchulz2023}
for all $t\in\Interval{t_0,1}$, where $\partial\Omega_{t}$ refers to the topological boundary of the set $\Omega_{t}\subset\R^3$. 
Then $\Sigma_t$ is the union of $D_{\pm t,\varepsilon(t)}$ with a connecting ribbon. 
In particular, $\Sigma_t$ is $\dih_1$-equivariant.
By the coarea formula, there exists a constant $c_\apar$ depending only on $\apar$ such that the area of the ribbon is bounded from above by $c_\apar\varepsilon_0 t$ and we have 
\begin{align}
\label{eqn:area}
\hsd^2(\Sigma_t)&\leq 2(1-t^2)\hsd^2(\solD_3)+c_\apar\varepsilon_0 t
<2\hsd^2(\solD_3)
\end{align}
for all $t\in\Interval{t_0,1}$
by choosing $\varepsilon_0=(t_0/c_\apar)\hsd^2(\solD_3)$. 
As $t$ decreases from $t_0$ to $0$ we intend to deform $\Sigma_t$ continuously and $\dih_1$-equivariantly into a neighbourhood of the point $-p_1\in\partial M_\apar$ without violating the area bound \eqref{eqn:area} and such that $\hsd^2(\Sigma_t)\to0$ as $t\searrow0$.   
This is made possible by the so-called catenoid estimate \cite{KetoverMarquesNeves2020}*{Proposition~2.1 and Theorem~2.4} if $t_0>0$ is chosen sufficiently small (see also \cite[§\,2.3]{HaslhoferKetover}).  
The estimates can be carried out explicitly as detailed in 
\cite[§\,2]{CarlottoFranzSchulz2022} and \cite[§\,5]{FranzSchulz2023}. 
\end{proof}

\begin{lemma}\label{lem:sweepout-annulus}
For each $\iota\in\{1,2,3\}$ there exists a $\dih_2$-sweepout $\{\Sigma_t^{\iota}\}_{t\in[0,1]}$ of $M_\apar$ such that 
\begin{itemize}[nosep]
\item $\hsd^2(\Sigma_0^{\iota})=\hsd^2(\Sigma_1^{\iota})=0$ and $\hsd^2(\Sigma_t^{\iota})<2\hsd^2(\solD_3)$ for all $t\in\interval{0,1}$,
\item $\Sigma_t^{\iota}$ is an annulus which is disjoint from the $x_\iota$-axis for all $t\in\interval{0,1}$ and intersects the other two axes orthogonally.  
\end{itemize}
\end{lemma}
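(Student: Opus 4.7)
The plan is to adapt the sweepout construction of Lemma~\ref{lem:sweepout-C} to produce annulus topology, for each $\iota \in \{1,2,3\}$. The middle range of each sweepout will consist of a continuous family of $\dih_2$-equivariant topological annuli obtained by modifying the two-horizontal-disc construction~\eqref{eqn:20240611}, and the endpoints $t = 0, 1$ will be handled via the catenoid estimate of Ketover--Marques--Neves \cite{KetoverMarquesNeves2020} exactly as in the proof of Lemma~\ref{lem:sweepout-C}. Throughout, we may assume without loss of generality that $\apar_3 = \max_\ell \apar_\ell$, so that $\solD_3$ has the smallest area among the coordinate discs and $2\hsd^2(\solD_3) = 2\min_\ell \hsd^2(\solD_\ell)$.

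For $\iota = 3$, I would set $\Sigma_t^3 \vcentcolon= \overline{\partial(\Omega_t \setminus T_t)\setminus\partial M_\apar}$, where $\Omega_t$ is as in~\eqref{eqn:20240611} and $T_t$ is a narrow elliptical tube around $\xi_3$ of cross-sectional radius $\delta(t) > 0$. This produces a $\dih_2$-equivariant annulus consisting of the two horizontal discs $D_{\pm t,\varepsilon(t)}$ with small central elliptical holes, joined by the lateral surface of $T_t$ running from $x_3 = -\apar_3 t$ to $x_3 = +\apar_3 t$. It is disjoint from $\xi_3$ and intersects $\xi_1, \xi_2$ orthogonally at the four points of $T_t \cap \{x_3 = 0\}$. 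The estimate~\eqref{eqn:area} picks up a positive contribution of order $\apar_3 t\delta(t)$ from the wall of $T_t$ and a negative contribution of order $\delta(t)^2\hsd^2(\solD_3)$ from the holes; choosing $\delta(t)$ of order $t\sqrt{1-t^2}$ with a sufficiently small prefactor yields the strict bound $\hsd^2(\Sigma_t^3) < 2\hsd^2(\solD_3)$.

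For $\iota \in \{1,2\}$ the above cannot be used verbatim, since any central connecting tube between the two horizontal discs of~\eqref{eqn:20240611} would necessarily cross $\xi_\iota$. Instead, I would use tubes around $\xi_\iota$ whose cross-sections in the plane perpendicular to $\xi_\iota$ are strongly flattened in the $x_3$-direction, with semi-axes of order $\apar_j \sqrt{1-(x_\iota/\apar_\iota)^2}$ and $\lambda(t)\apar_3\sqrt{1-(x_\iota/\apar_\iota)^2}$ (where $\{j\} = \{1,2\}\setminus\{\iota\}$), capped at $|x_\iota| = \apar_\iota(1-\eta(t))$ where $\lambda$ is required to rise smoothly to $1$ so that the two boundary circles lie on $\partial M_\apar$. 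In the flattened regime $\lambda \ll 1$ with $\eta > 0$ fixed, the lateral area of such a tube tends to twice the area of a subregion of $\solD_3$ that is strictly smaller than $\solD_3$ itself, hence strictly less than $2\hsd^2(\solD_3)$; one then checks that for $\lambda(t)$ small enough this strict inequality persists.

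The main obstacle is verifying this area bound for $\iota \in \{1,2\}$ in elongated ellipsoids with $\apar_3 \gg \apar_1, \apar_2$, since the naive cylindrical tube around $\xi_\iota$ has maximal area of order $\apar_\iota$ times the perimeter of the ellipse with semi-axes $\apar_j, \apar_3$, which far exceeds $2\pi\apar_1\apar_2 = 2\hsd^2(\solD_3)$. Controlling the ``endcap'' contribution as $\lambda$ rises from its small bulk value to $1$ at the free boundary, together with the interplay of the flatness parameter $\lambda$ and the shortening parameter $\eta$, is the technical crux. Once this is in place, the endpoint degenerations at $t \to 0$ and $t \to 1$ follow the same scheme as in Lemma~\ref{lem:sweepout-C}, invoking the catenoid estimate to deform $\Sigma_t^\iota$ continuously and $\dih_2$-equivariantly to a 1-dimensional set of vanishing area.
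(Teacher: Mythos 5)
For $\iota=3$ your construction is essentially the paper's: in the proof of Lemma~\ref{lem:sweepout-C} replace $D_\varepsilon$ by $\solD_3\setminus B_\varepsilon(0)$, so that $\Sigma_t$ becomes the union of the two planar annuli $D_{\pm t,\varepsilon(t)}$ with a connecting central tube, and the area estimate \eqref{eqn:area} carries over verbatim. Your phrasing via removing a thin elliptical tube $T_t$ from $\Omega_t$ is a cosmetic variation of the same idea and works.

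For $\iota\in\{1,2\}$ you took a genuinely different and much harder route, and there is a real gap. The paper's construction is a minimal modification of Lemma~\ref{lem:sweepout-C}: for $\iota=2$ replace $D_\varepsilon$ by $\solD_3\setminus(B_\varepsilon(p_1)\cup B_\varepsilon(-p_1))$ (holes at the two endpoints of $\xi_1$ on $\partial\solD_3$), and for $\iota=1$ by $\solD_3\setminus(B_\varepsilon(p_2)\cup B_\varepsilon(-p_2))$. Then $\Sigma_t$ is the union of the two horizontal topological discs $D_{\pm t,\varepsilon(t)}$ joined by \emph{two} ribbons, which is an annulus; the two ribbons sit near $\pm p_1$ (resp.\ $\pm p_2$), so $\Sigma_t$ crosses $\xi_1$ (resp.\ $\xi_2$) orthogonally there and crosses $\xi_3$ orthogonally at the centres of $D_{\pm t}$, while staying disjoint from $\xi_2$ (resp.\ $\xi_1$). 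The area bound is exactly \eqref{eqn:area} with a possibly different constant $c_\apar$, and the degeneration as $t\searrow 0$ is handled by the catenoid estimate by widening the two ribbons so that $\Sigma_t$ collapses onto the $1$-dimensional segment $\xi_1$ (resp.\ $\xi_2$). No assumption $\apar_3=\max_\ell\apar_\ell$ is needed. Your alternative — a tube around $\xi_\iota$ with cross-sections flattened by a parameter $\lambda(t)$ and truncated at $|x_\iota|=\apar_\iota(1-\eta(t))$ — runs into precisely the difficulty you flag: controlling the endcap contribution where $\lambda$ must rise from a small bulk value to $1$, simultaneously with the strict inequality $<2\hsd^2(\solD_3)$ in the squashed bulk. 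You have not shown this can be done, and it is not an obvious refinement: near the endcaps the cross-sections are close to full discs of semi-axes $\sim\apar_j\sqrt{2\eta}$ and $\sim\apar_3\sqrt{2\eta}$, and the lateral area there scales like $\apar_\iota\eta\cdot(\apar_j+\apar_3)\sqrt{\eta}$, whereas the margin you have in the bulk is only $O(\eta)\hsd^2(\solD_3)$; whether the constants work out in all aspect ratios is exactly what needs to be proven, and you leave it open. Moreover, it is not specified how your tube family fits into a genuine sweepout (i.\,e.\ what the associated one-parameter family of open sets is, with $\Omega_0=\emptyset$ and $\Omega_1=M_\apar$); the paper's two-ribbon construction inherits this structure directly from Lemma~\ref{lem:sweepout-C}. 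In short: for $\iota=3$ you match the paper, but for $\iota\in\{1,2\}$ you should replace the tube-around-$\xi_\iota$ idea by the paper's two-hole modification, which makes the whole lemma a routine variant of Lemma~\ref{lem:sweepout-C}.
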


\begin{proof}
For $\iota=2$ it suffices to replace $D_\varepsilon$ with $\solD_3\setminus(B_\varepsilon(p_1)\cup B_\varepsilon(-p_1))$ 
in the proof of Lemma~\ref{lem:sweepout-C}. 
For $\iota=1$ we additionally replace the point $p_1$ by $p_2=(0,\apar_2,0)$ and then follow exactly the same approach. 
With these modifications, $\Sigma_t$ defined as in \eqref{eqn:20240611} becomes $\dih_2$-equivariant and
resembles the union of the horizontal discs $D_{\pm t,\varepsilon(t)}$ with two connecting ribbons. 
Estimate \eqref{eqn:area} still holds for all $t\in\Interval{t_0,1}$, possibly with a different constant $c_\apar$. 
As $t$ decreases from $t_0$ to $0$, the catenoid estimate allows us to $\dih_2$-equivariantly widen the two ribbons, deforming $\Sigma_t$ continuously into a neighbourhood of the segment $\xi_k$. 

For $\iota=3$, we return to the proof of Lemma~\ref{lem:sweepout-C} and replace $D_\varepsilon$ with $\solD_3\setminus B_\varepsilon(0)$. 
Then $\Sigma_t$ defined as in \eqref{eqn:20240611} is the $\dih_2$-equivariant union of the planar annuli $D_{\pm t,\varepsilon(t)}$ with a connecting central tube (instead of connecting ribbons with boundary). 
As $t$ decreases from $t_0$ to $0$ the catenoid estimate allows us to $\dih_2$-equivariantly widen the tube without violating the area estimate \eqref{eqn:area}, deforming $\Sigma_t$ continuously into a neighbourhood of the equator $\partial\solD_3$.   
\end{proof}

\begin{lemma}\label{lem:sweepout-S}
There exists a $\dih_1$-sweepout $\{\Sigma_t^{\solS}\}_{t\in[0,1]}$ of $M_{\apar}$ such that  
\begin{itemize}[nosep]
\item $\hsd^2(\Sigma_0^{\solS})=\hsd^2(\Sigma_1^{\solS})=\hsd^2(\solD_3)$ 
and $\hsd^2(\Sigma_t^{\solS})<3\hsd^2(\solD_3)$ for all $t\in\interval{0,1}$, 
\item $\Sigma_t^{\solS}$ is a topological disc containing the segment $\xi_1$ for all $t\in\interval{0,1}$.
\end{itemize}
\end{lemma}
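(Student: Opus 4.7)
I follow the approach of the proof of Lemma~\ref{lem:sweepout-C}, mirroring the $n=1$ (i.\,e.\ genus $g=0$) case of the sweepout construction in \cite[§\,2]{CarlottoFranzSchulz2022}. Relative to Lemma~\ref{lem:sweepout-C}, the new ingredients are that we use three horizontal ``sheets'' rather than two, and that the cuts are positioned away from $\xi_1$ so that the segment is contained in every $\Sigma_t^{\solS}$. Set $p_2 \vcentcolon= (0, \apar_2, 0)$, so that $\pm p_2 \notin \xi_1$. For $\epsilon > 0$ and $\star \in \{+, -, 0\}$, define subsets of $\solD_3$ by
\[
D_\epsilon^+ \vcentcolon= \solD_3 \setminus B_\epsilon(p_2), \quad
D_\epsilon^- \vcentcolon= \solD_3 \setminus B_\epsilon(-p_2), \quad
D_\epsilon^0 \vcentcolon= \solD_3 \setminus \bigl(B_\epsilon(p_2) \cup B_\epsilon(-p_2)\bigr),
\]
together with their scaled-translated copies $D_{\tau, \epsilon}^\star \vcentcolon= \sqrt{1-\tau^2}\, D_\epsilon^\star + (0, 0, \apar_3 \tau)$ for $\tau \in \interval{-1, 1}$. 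Since $\pm p_2 \notin \xi_1$, we have $\xi_1 \subset D_{0, \epsilon}^0$ whenever $\epsilon < \apar_2$.

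With $t_0 \in \interval{0, 1/2}$ and $\epsilon_0 > 0$ to be chosen, pick continuous functions $\epsilon \colon [t_0, 1-t_0] \to \intervaL{0, \epsilon_0}$ and $\tilde h \colon [t_0, 1-t_0] \to \interval{0, 1}$ with $\tilde h$ attaining its maximum at $t=1/2$. For $t \in [t_0, 1-t_0]$, in analogy with \eqref{eqn:20240611}, I build a $\dih_1$-equivariant open set $\Omega_t \subset M_\apar$ whose relative boundary $\Sigma_t^{\solS} \vcentcolon= \overline{\partial\Omega_t \setminus \partial M_\apar}$ consists of the three horizontal sheets $D_{-\tilde h(t), \epsilon(t)}^-$, $D_{0, \epsilon(t)}^0$, $D_{\tilde h(t), \epsilon(t)}^+$ joined by two thin vertical notch-wall ribbons: one near $+p_2$ linking the middle sheet to the top sheet, and one near $-p_2$ (the $\rotation_1$-image of the first) linking middle to bottom. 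An Euler-characteristic count (three disc-sheets plus two disc-ribbons, identified along four arcs, yielding $\chi = 3+2-4 = 1$) shows $\Sigma_t^{\solS}$ is a topological disc; it is $\dih_1$-equivariant by construction and contains $\xi_1$ because $\xi_1$ lies in the middle sheet. As in \eqref{eqn:area}, the coarea formula yields
\[
\hsd^2(\Sigma_t^{\solS}) \leq \bigl(1 + 2(1 - \tilde h(t)^2)\bigr) \hsd^2(\solD_3) + c_\apar \epsilon_0 < 3 \hsd^2(\solD_3)
\]
for a constant $c_\apar$ depending only on $\apar$, provided $\epsilon_0$ is chosen sufficiently small relative to $\min_{[t_0,1-t_0]}\tilde h$.

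To extend the family to $t \in [0, t_0] \cup [1-t_0, 1]$ so that $\Sigma_0^{\solS} = \Sigma_1^{\solS} = \solD_3$, I invoke the catenoid estimate of Ketover, Marques, and Neves \cite[Proposition 2.1, Theorem 2.4]{KetoverMarquesNeves2020}, exactly as in the proof of Lemma~\ref{lem:sweepout-C}: the top and bottom sheets and their connecting ribbons are equivariantly collapsed into small neighbourhoods of $\pm p_2$ through thin catenoidal necks, and the corresponding notches of the middle sheet close up as $\epsilon(t) \to 0$, so that $\Sigma_t^{\solS}$ converges to $\solD_3$ with area precisely $\hsd^2(\solD_3)$. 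The main technical obstacle is to control the area cost of this endpoint deformation, which must simultaneously preserve embeddedness, $\dih_1$-symmetry, the topological-disc property, and the strict area bound $\hsd^2(\Sigma_t^{\solS}) < 3 \hsd^2(\solD_3)$. The required catenoid-neck estimates are carried out explicitly in \cite[§\,2]{CarlottoFranzSchulz2022} and \cite[§\,5]{FranzSchulz2023} and transfer to the present setting essentially verbatim.
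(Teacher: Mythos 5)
Your construction follows the same approach as the paper, which emulates the $g=0$ case of the sweepout from \cite[\S\,2]{CarlottoFranzSchulz2022}: three horizontal sheets $D^{\pm}_{\cdot,\varepsilon}$ and $D^0_\varepsilon$ with the notch points placed at $\pm p^{\pm}=(0,\pm\apar_2,0)$ away from $\xi_1$, joined by two ribbons, with the catenoid estimate controlling the area of the degenerating phase. The only deviations are cosmetic: the paper parametrises the middle phase on $\Interval{t_0,1}$ and lets the outer sheets rise to the poles and shrink (so that $\Sigma_1=\solD_3$ falls out automatically, leaving the catenoid estimate to handle only $t\searrow 0$), whereas you use a symmetric bump $\tilde h$ on $[t_0,1-t_0]$ and invoke the catenoid estimate at both ends; and you describe the endpoint deformation as collapsing the outer sheets toward $\pm p^\pm$ rather than the paper's phrasing of widening the ribbons. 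Both choices land on the same area estimate $(3-2\tilde h^2)\hsd^2(\solD_3)+c_\apar\varepsilon_0$, the same topological-disc count, and the same reliance on \cite{KetoverMarquesNeves2020}, so the proposal is correct and essentially the paper's argument.
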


\begin{proof}
We closely follow the proof of \cite[Lemma~2.2]{CarlottoFranzSchulz2022} but for genus $g=0$ rather than $g\geq1$. 
Given $\varepsilon>0$ and $p^{\pm}=(0,\pm\apar_2,0)$ consider the sets 
\begin{align*}
D_\varepsilon^{\pm}&\vcentcolon=\solD_3\setminus B_\varepsilon(p^{\pm}), &
D_\varepsilon^0&\vcentcolon=D_\varepsilon^{+}\cap D_\varepsilon^{-}, &
D_{t,\varepsilon}^{\pm}&\vcentcolon=\sqrt{1-t^2}D_\varepsilon^{\pm}\pm(0,0,\apar_3t) 
\end{align*}
for any $t\in\Interval{0,1}$.
With $0<t_0<1$ and $\varepsilon_0>0$ to be chosen, let $\varepsilon\colon\Interval{t_0,1}\to\intervaL{0,\varepsilon_0}$ be a continuous function of $t$ such that $\varepsilon(t)\to0$ as $t\to1$ and define  
\begin{align*}
\Omega_t^{\pm}&\vcentcolon=\bigcup_{\tau\in[0,t]}D_{\tau,\varepsilon(t)}^{\pm}, &
S_t^{\pm}&\vcentcolon=\overline{\partial\Omega_t^{\pm}\setminus(\partial\B^3\cup D_0^{\pm})}, & 
\Sigma_t&\vcentcolon=S_t^{+}\cup D_{\varepsilon(t)}^0\cup S_t^{-}
\end{align*}
as in \cite[Eqn.~(4)]{CarlottoFranzSchulz2022}. 
Then $\Sigma_t$ is the union of the three parallel topological discs $D_{t,\varepsilon(t)}^{\pm}$, $D_{\varepsilon(t)}^0$ with two ribbons joining them. 
As in the proof of Lemma~\ref{lem:sweepout-C} there is a constant $c_\apar$ such that the total area of the two ribbons is bounded from above by $c_\apar\varepsilon_0t$ and we obtain 
\begin{align}
\label{eqn:area2}
\hsd^2(\Sigma_t)&\leq  (3-2t^2)\hsd^2(\solD_3)+c_\apar\varepsilon_0 t
<3\hsd^2(\solD_3)
\end{align}
for all $t\in\Interval{t_0,1}$
by choosing $\varepsilon_0=(t_0/c_\apar)\hsd^2(\solD_3)$. 
If $t_0>0$ is chosen sufficiently small, the catenoid estimate allows us to increase the width of the ribbons as $t$ decreases from $t_0$ to $0$ without violating the area bound \eqref{eqn:area2} such that $\Sigma_t$ is deformed smoothly and $\dih_1$-equivariantly into $\solD_3$ as $t\searrow0$. 
The corresponding estimates are detailed in \cite[§\,2]{CarlottoFranzSchulz2022}.  
\end{proof}

\begin{remark}\label{rem:sweepout-g1b1}
By setting $p^{\pm}\vcentcolon=\bigl(\apar_1\cos(\pm\tfrac{\pi}{4}),\apar_2\sin(\pm\tfrac{\pi}{4}),0\bigr)$
and $D_\varepsilon^{\pm}\vcentcolon=\solD_3\setminus(B_\varepsilon(p^{\pm})\cup B_\varepsilon(-p^{\pm}))$  
in the proof of Lemma~\ref{lem:sweepout-C} we can follow the same approach to construct a $\dih_2$-sweepout $\{\Sigma_t\}_{t\in[0,1]}$ of $M_\apar$ such that 
\begin{itemize}[nosep]
\item $\hsd^2(\Sigma_0)=\hsd^2(\Sigma_1)=\hsd^2(\solD_3)$ 
and $\hsd^2(\Sigma_t)<3\hsd^2(\solD_3)$ for all $t\in\interval{0,1}$,
\item $\Sigma_t$ contains $\xi_1\cup\xi_2$, and has genus one and connected boundary for all $t\in\interval{0,1}$.
\end{itemize}
\end{remark}

\section{Width estimate}\label{sec:width}

We recall (e.\,g.~from \cite[Definition~1.3]{FranzSchulz2023}) that the equivariant saturation $\Pi$ of a given equivariant sweepout $\{\Sigma_t\}_{t\in[0,1]}$ of $M_\apar$ is defined as the set of all $\{f(t,\Sigma_t)\}_{t\in[0,1]}$, where $f\colon[0,1]\times M_\apar\to M_\apar$ is smooth such that $f(t,\cdot)$ is an equivariant diffeomorphism for all $t\in[0,1]$ which coincides with the identity if $t\in\{0,1\}$. 
The corresponding min-max width of $\Pi$ is  
\[
W_\Pi\vcentcolon=\adjustlimits\inf_{\{\Lambda_t\}\in \Pi~}\sup_{t\in[0,1]}\hsd^2({\Lambda_t}).
\]
The min-max approach (cf.~\cite[Theorem~1.4]{FranzSchulz2023} and references therein) requires the strict inequality $W_\Pi>\max\{\hsd^2(\Sigma_0),\hsd^2(\Sigma_1)\}$. 
This estimate is typically proven by levering the (relative) isoperimetric inequality in the ambient space. 
We recall the notation $\solD_3\vcentcolon=M_\apar\cap\{x_3=0\}$.

\begin{lemma}[{Isoperimetric inequality in ellipsoids \cite[Corollary~1]{Ros2005}}]
\label{lem:isoperim}
Let $M_\apar$ be as in \eqref{eqn:ellipsoid} such that $\apar_3\geq\max\{\apar_1,\apar_2\}$. 
Then any finite perimeter set $F\subset M_\apar$ with Lebesgue measure $\hsd^3(F)=\frac{1}{2}\hsd^3(M_\apar)$ 
has relative perimeter 
\(P(F;M_\apar)\geq\hsd^2(\solD_3)\).
\end{lemma}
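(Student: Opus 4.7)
The statement is Corollary~1 of Ros's paper on isoperimetric problems in symmetric convex bodies, so the cleanest way forward is simply to quote it. My own plan to reconstruct the argument would proceed in three steps, starting from standard compactness in $BV$ that produces an isoperimetric minimizer $F^* \subset M_\apar$ of volume $\tfrac{1}{2}\hsd^3(M_\apar)$, whose relative boundary $\Sigma$ is, away from $\partial M_\apar$, a smooth embedded constant-mean-curvature surface meeting $\partial M_\apar$ orthogonally (no interior singularities arise in ambient dimension three). By the minimizing property, it suffices to prove the lower bound for $F^*$.

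Next, I would restore symmetry by Steiner-symmetrizing $F^*$ successively with respect to each of the three coordinate planes $\{x_\iota=0\}$. Because the ellipsoid $M_\apar$ is itself Steiner-symmetric with respect to each coordinate plane, the containment $F^*\subset M_\apar$ is preserved at each step, as is the volume, while the relative perimeter does not increase. This reduces the problem to a competitor invariant under the Klein four-group $\langle\rotation_1,\rotation_2,\rotation_3\rangle$.

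Finally, I would classify such symmetric free-boundary CMC competitors and argue that the minimizer must be planar and parallel to one of the coordinate planes, so that by the volume constraint $\Sigma$ coincides with one of $\solD_1,\solD_2,\solD_3$. A direct computation gives $\hsd^2(\solD_\iota)=\pi\apar_j\apar_k$ for $\{\iota,j,k\}=\{1,2,3\}$, and the hypothesis $\apar_3\geq\max\{\apar_1,\apar_2\}$ selects $\solD_3$ as the smallest, yielding the stated lower bound. I expect the main obstacle to lie precisely in this last classification step: ruling out exotic non-planar symmetric free-boundary CMC surfaces of smaller area than $\solD_3$ is not immediate and would require either a free-boundary adaptation of Alexandrov's moving-planes method or the more refined variational comparison carried out by Ros, which is why in practice the sharpest route is to invoke his theorem as a black box.
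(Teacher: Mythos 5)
The paper provides no proof of this lemma: it is stated as a direct black-box citation to Ros's Corollary~1, which is exactly what your opening sentence recommends, so your primary approach matches the paper's. Your subsequent three-step reconstruction (minimizer existence and regularity, Steiner symmetrization against the coordinate planes, classification of symmetric half-volume minimizers) is not part of the paper's argument and, as you yourself acknowledge, leaves the final classification step as a genuine gap; since the paper invokes Ros without reproving his result, the sketch is neither required nor supplied there.
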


\begin{lemma}[Uniqueness of isoperimetric sets in ellipsoids]
\label{lem:isoperim-unique}
Let $M_\apar$ be as in \eqref{eqn:ellipsoid} such that $\apar_3>\max\{\apar_1,\apar_2\}$. 
Suppose $F\subset M_\apar$ has Lebesgue measure $\hsd^3(F)=\frac{1}{2}\hsd^3(M_\apar)$ 
and relative perimeter $P(F;M_\apar)\leq\hsd^2(\solD_3)$.
Then the relative boundary of $F$ in $M_\apar$ coincides with $\solD_3$. 
\end{lemma}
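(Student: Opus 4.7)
The plan is to leverage the equality case of Lemma~\ref{lem:isoperim} together with rigidity analysis for isoperimetric regions. First, the hypothesis $P(F;M_\apar)\leq\hsd^2(\solD_3)$ combined with Lemma~\ref{lem:isoperim} forces $P(F;M_\apar)=\hsd^2(\solD_3)$, so that $F$ is an isoperimetric region at volume $\tfrac{1}{2}\hsd^3(M_\apar)$. Standard regularity theory for volume-constrained area minimisers with free boundary then ensures that the reduced boundary $\Sigma\vcentcolon=M_\apar\cap\partial^{\ast}F$ is a smooth, embedded, stable free boundary constant-mean-curvature surface with $\hsd^2(\Sigma)=\hsd^2(\solD_3)$.

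The core task is then to prove $\Sigma=\solD_3$. This is a sharp uniqueness statement for the isoperimetric region at half-volume in $M_\apar$ under the strict hypothesis $\apar_3>\max\{\apar_1,\apar_2\}$. To establish it, I would revisit the proof of Lemma~\ref{lem:isoperim} due to Ros, which proceeds through a chain of inequalities involving horizontal slicing and symmetrisation of the candidate set $F$. The strategy is to track equality at each step of this chain: equality throughout should force each horizontal slice $F\cap\{x_3=t\}$ to coincide up to an $\hsd^2$-null set with either the whole elliptical cross-section $M_\apar\cap\{x_3=t\}$ or the empty set, for almost every $t\in[-\apar_3,\apar_3]$. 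The half-volume constraint then pins down $F=M_\apar\cap\{x_3\leq 0\}$ up to swapping with its complement, which leaves the relative boundary unchanged, yielding $\Sigma=\solD_3$.

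The main obstacle is propagating strictness through the intermediate inequalities in Ros's symmetrisation, and this is precisely where the strict hypothesis $\apar_3>\max\{\apar_1,\apar_2\}$ enters: it excludes the alternative planar candidates $\solD_1$ and $\solD_2$, whose areas $\pi\apar_2\apar_3$ and $\pi\apar_1\apar_3$ both strictly exceed $\hsd^2(\solD_3)=\pi\apar_1\apar_2$, so that in the borderline case $\apar_3=\max\{\apar_1,\apar_2\}$ considered in Lemma~\ref{lem:isoperim} alone the slice analysis would leave room for additional horizontal or rotated minimisers. A backup route, should the equality analysis within Ros's original argument prove too delicate to track directly, is to combine Alexandrov's moving-plane reflection principle applied to the smooth embedded CMC surface $\Sigma$ with respect to the three coordinate planes of symmetry of $M_\apar$ with a direct area comparison on the resulting symmetric surface to force $\Sigma$ into the horizontal plane $\{x_3=0\}$, where the area identity $\hsd^2(\Sigma)=\hsd^2(\solD_3)$ and the half-volume constraint then leave no option other than $\Sigma=\solD_3$.
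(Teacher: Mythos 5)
You correctly identify the statement as an equality/rigidity problem, but neither of your two proposed routes closes, and the paper's argument is quite different from both. Your primary route (tracking equality through Ros's symmetrisation) is plausible in spirit, but as written it defers the entire content to ``propagating strictness'' -- which, as you acknowledge, is the hard part -- and would moreover require reproducing the argument of \cite{Ros2005}, which the paper uses only as a black box. Your backup route has a genuine gap: Alexandrov moving planes applied to a free boundary CMC surface in a domain with coordinate-plane symmetries would, at best, yield symmetry of $\Sigma$ with respect to those planes, and symmetry alone does not imply planarity; nothing in your sketch actually ``forces $\Sigma$ into the horizontal plane.'' The preliminary reduction to a smooth CMC surface via regularity theory also turns out to be unnecessary.

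The paper instead uses a short compression argument that needs neither regularity theory nor the internals of Ros's proof. Assume toward a contradiction that the relative boundary $\Sigma$ of $F$ is not $\solD_3$; the half-volume constraint then forces $x_3$ to be nonconstant on $\Sigma$. Compress in the $x_3$-direction by the factor $\lambda=\max\{\apar_1,\apar_2\}/\apar_3<1$. The image $\tilde F$ still encloses half the volume of the compressed ellipsoid $M_{\tilde\apar}$ with $\tilde\apar=(\apar_1,\apar_2,\lambda\apar_3)$, and $M_{\tilde\apar}$ still satisfies $\lambda\apar_3\geq\max\{\apar_1,\apar_2\}$, so Lemma~\ref{lem:isoperim} gives $P(\tilde F;M_{\tilde\apar})\geq\hsd^2(\solD_3)$. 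On the other hand, the area formula shows that compressing by $\lambda<1$ \emph{strictly} decreases the relative perimeter of any finite-perimeter set whose reduced boundary is not $\hsd^2$-a.e.\ horizontal, so $P(\tilde F;M_{\tilde\apar})<P(F;M_\apar)\leq\hsd^2(\solD_3)$, a contradiction. If you want a rigidity argument here, this compression trick is the mechanism to aim for.
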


\begin{proof}
Towards a contradiction, assume that $\Sigma\neq \solD_3$ is the relative boundary of $F$ in $M_\apar$. 
The assumption $\hsd^3(F)=\frac{1}{2}\hsd^3(M_\apar)$ implies that $\Sigma$ intersects $\solD_3$. 
In particular, $x_3$ is not constant on $\Sigma$. 
Let $\lambda=\max\{\apar_1,\apar_2\}/\apar_3$ and 
$\tilde{F}\vcentcolon=\{(x_1,x_2,x_3)\in\R^3\mid (x_1,x_2,x_3/\lambda)\in F\}$. 
Then, $\hsd^3(\tilde F)=\lambda\hsd^3(F)=\frac{\lambda}{2}\hsd^3(M_\apar)
=\frac{1}{2}\hsd^3(M_{\tilde\apar})$, where $\tilde{\apar}\vcentcolon=(\apar_1,\apar_2,\lambda\apar_3)$. 
On the one hand, Lemma~\ref{lem:isoperim} implies 
$P(\tilde F;M_{\tilde\apar})\geq\hsd^2(M_{\tilde\apar}\cap\{x_3=0\})=\hsd^2(M_{\apar}\cap\{x_3=0\})$ 
because $\lambda\apar_3\geq\max\{\apar_1,\apar_2\}$. 
On the other hand, the area formula \cite[Theorem~8.1]{Maggi2012} implies  
$P(\tilde F;M_{\tilde\apar})<P(F;M_{\apar})\leq\hsd^2\bigl(M_\apar\cap\{x_3=0\}\bigr)$ 
because $x_3$ is nonconstant on $\Sigma$ and $\lambda<1$ by assumption. 
This contradiction proves the claim. 
\end{proof}

\begin{lemma}[Stability of the isoperimetric inequality in ellipsoids]
\label{lem:isoperim-stability}
Let $M_\apar$ be as in \eqref{eqn:ellipsoid} such that $\apar_3>\max\{\apar_1,\apar_2\}$. 
For every $\varepsilon>0$ there exists $\delta>0$ such that given $F\subset M_\apar$ 
with Lebesgue measure $\hsd^3(F)=\frac{1}{2}\hsd^3(M_\apar)$
and relative perimeter $P(F;Z)\leq\hsd^2(\solD_3)+\delta$, 
we have either $\hsd^3\bigl(F\symdiff(M_\apar\cap\{x_3\geq0\})\bigr)\leq\varepsilon$
or $\hsd^3\bigl(F\symdiff(M_\apar\cap\{x_3\leq0\})\bigr)\leq\varepsilon$. 
\end{lemma}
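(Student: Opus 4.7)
The plan is to argue by contradiction using the classical compactness scheme for sets of finite perimeter, combined with the rigidity already captured in Lemma~\ref{lem:isoperim-unique}. Suppose the statement fails. Then there exists $\varepsilon>0$ and a sequence of finite perimeter sets $F_n\subset M_\apar$ with $\hsd^3(F_n)=\tfrac{1}{2}\hsd^3(M_\apar)$ and $P(F_n;M_\apar)\leq\hsd^2(\solD_3)+\tfrac{1}{n}$, yet
\[
\hsd^3\bigl(F_n\symdiff(M_\apar\cap\{x_3\geq0\})\bigr)>\varepsilon
\quad\text{and}\quad
\hsd^3\bigl(F_n\symdiff(M_\apar\cap\{x_3\leq0\})\bigr)>\varepsilon
\]
for every $n\in\N$.

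Since the relative perimeters are uniformly bounded and $M_\apar$ is a smooth bounded domain, the standard BV compactness theorem (e.g.~\cite[Theorem~12.26]{Maggi2012}) yields a subsequence, still denoted $\{F_n\}$, which converges in $L^1(M_\apar)$ to some finite perimeter set $F_\infty\subset M_\apar$. Continuity of Lebesgue measure under $L^1$ convergence of indicator functions gives $\hsd^3(F_\infty)=\tfrac{1}{2}\hsd^3(M_\apar)$, while lower semicontinuity of relative perimeter yields
\[
P(F_\infty;M_\apar)\leq\liminf_{n\to\infty}P(F_n;M_\apar)\leq\hsd^2(\solD_3).
\]
Thus $F_\infty$ satisfies exactly the hypotheses of Lemma~\ref{lem:isoperim-unique}, which forces its relative boundary in $M_\apar$ to coincide with $\solD_3$. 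Since $F_\infty$ has half the ambient volume, this means that up to a null set, $F_\infty$ equals either $M_\apar\cap\{x_3\geq 0\}$ or $M_\apar\cap\{x_3\leq 0\}$.

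The $L^1$ convergence $F_n\to F_\infty$ then implies that at least one of the two symmetric differences tends to zero, contradicting the lower bound $\varepsilon$ for all $n$. The only subtle point is that a priori different subsequences of $\{F_n\}$ might converge to different halves; this causes no trouble because the contradiction is produced from any single $L^1$-convergent subsequence (the sequence of symmetric differences with one specific half becomes arbitrarily small along it). The entire argument is routine once Lemmas~\ref{lem:isoperim} and \ref{lem:isoperim-unique} are in hand; the principal obstacle is bundled into the uniqueness statement of Lemma~\ref{lem:isoperim-unique}, which itself relies on the strict inequality $\apar_3>\max\{\apar_1,\apar_2\}$ and is precisely the ingredient allowing the compactness argument to terminate in a qualitative $L^1$-rigidity result rather than merely in a quantitative one.
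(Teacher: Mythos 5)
Your proof is correct and follows essentially the same strategy as the paper: a contradiction argument via BV compactness \cite[Theorem~12.26]{Maggi2012}, lower semicontinuity of relative perimeter, and the rigidity from Lemma~\ref{lem:isoperim-unique}. The paper's proof is nearly word-for-word what you wrote.
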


\begin{proof} 
We follow the proof of \cite[Lemma~3.6]{CarlottoFranzSchulz2022}. 
Towards a contradiction, suppose that there exist $\varepsilon>0$ and a sequence $\{F_k\}_{k\in\N}$ of finite perimeter sets satisfying 
$\hsd^3({F_k})=\frac{1}{2}\hsd^3(M_\apar)$ and 
\begin{align*}
\hsd^3\bigl(F_k\symdiff(M_\apar\cap\{\pm x_3\geq0\})\bigr)&\geq\varepsilon, &
P(F_k;M_\apar)&\leq\hsd^2(\solD_3)+\delta_k
\end{align*}
such that $\delta_k\to0$ as $k\to\infty$. 
The compactness result \cite[Theorem 12.26]{Maggi2012} for finite perimeter sets 
implies the existence of $F_\infty\subset M_\apar$ with finite perimeter such that a subsequence of $\{F_k\}_{k\in\N}$ satisfies $\hsd^3({F_\infty\symdiff F_k})\to 0$ as $k\to\infty$. 
In particular, $\hsd^3({F_\infty})=\frac{1}{2}\hsd^3(M_\apar)$. 
Moreover,  
\begin{align*}
P(F_\infty;M_\apar)\leq\liminf_{k\to\infty} P(F_k;M_\apar)=\hsd^2(\solD_3)
\end{align*}
since the perimeter is lower semicontinuous (cf.~\cite[Proposition 12.15]{Maggi2012}). 
Lemma~\ref{lem:isoperim-unique} then implies that 
either $\hsd^3\bigl(F_{\infty}\symdiff(M_\apar\cap\{x_3\geq0\})\bigr)=0$
or $\hsd^3\bigl(F_{\infty}\symdiff(M_\apar\cap\{x_3\leq0\})\bigr)=0$ in contradiction with our choice of the sequence $\{F_k\}_{k\in\N}$.
\end{proof}

\begin{lemma}[Width estimate]
\label{lem:width}
Let $M_\apar$ be as in \eqref{eqn:ellipsoid} such that $\apar_3>\max\{\apar_1,\apar_2\}$ and let $\{\Sigma_t\}_{t\in[0,1]}$ be any one of the equivariant sweepouts of $M_\apar$ constructed in Lemmata~\ref{lem:sweepout-C}--\ref{lem:sweepout-S}. 
Then the min-max width $W_\Pi$ of the corresponding equivariant saturation satisfies 
\[W_\Pi>\hsd^2(\solD_3)\geq\max\{\hsd^2(\Sigma_0),\hsd^2(\Sigma_1)\}.\]
\end{lemma}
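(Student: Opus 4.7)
The second inequality is immediate from the constructions: Lemmata~\ref{lem:sweepout-C} and~\ref{lem:sweepout-annulus} give $\hsd^2(\Sigma_0)=\hsd^2(\Sigma_1)=0$, while Lemma~\ref{lem:sweepout-S} gives $\hsd^2(\Sigma_0)=\hsd^2(\Sigma_1)=\hsd^2(\solD_3)$. For the main strict inequality $W_\Pi>\hsd^2(\solD_3)$ the plan is the standard two-step template: first derive $W_\Pi\geq\hsd^2(\solD_3)$ from the isoperimetric inequality (Lemma~\ref{lem:isoperim}), then upgrade it to a strict inequality via the quantitative stability statement (Lemma~\ref{lem:isoperim-stability}).

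For the first step, fix an arbitrary family $\{\Lambda_t\}_{t\in[0,1]}=\{f(t,\Sigma_t)\}_{t\in[0,1]}\in\Pi$. It suffices to identify some $t^*\in[0,1]$ at which one of the two components of $M_\apar\setminus\Lambda_{t^*}$ has volume exactly $\tfrac{1}{2}\hsd^3(M_\apar)$, because Lemma~\ref{lem:isoperim} then forces $\hsd^2(\Lambda_{t^*})\geq\hsd^2(\solD_3)$. For the $\solS$-sweepout this actually holds at \emph{every} $t$ by a symmetry argument: $\Sigma_t$ contains the segment $\xi_1$, hence so does $\Lambda_t$ by equivariance of $f$, and $\rotation_1$ reverses the conormal of $\Lambda_t$ along $\xi_1$ and therefore swaps the two components of $M_\apar\setminus\Lambda_t$, which must then share the same volume $\tfrac{1}{2}\hsd^3(M_\apar)$. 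For the $\solC$-sweepout and the $\iota$-sweepouts, the two complementary regions are each preserved by the symmetries, and one instead invokes the intermediate value theorem applied to the volume function $V(t)=\hsd^3(\Omega_t)$ of a continuously chosen component of $M_\apar\setminus\Lambda_t$, which interpolates between $0$ and $\hsd^3(M_\apar)$ by the sweepout property and the degeneration of $\Sigma_0,\Sigma_1$. In either case $W_\Pi\geq\hsd^2(\solD_3)$.

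For the strict inequality, I argue by contradiction. Assume $W_\Pi=\hsd^2(\solD_3)$, pick a minimising sequence $\{\Lambda^{(k)}_t\}\in\Pi$ with $\sup_t\hsd^2(\Lambda^{(k)}_t)\to\hsd^2(\solD_3)$, and let $t^*_k$ together with $F_k\vcentcolon=\Omega^{(k)}_{t^*_k}$ be as above. Then $\hsd^3(F_k)=\tfrac{1}{2}\hsd^3(M_\apar)$ and $P(F_k;M_\apar)\leq\hsd^2(\Lambda^{(k)}_{t^*_k})\to\hsd^2(\solD_3)$, so Lemma~\ref{lem:isoperim-stability} forces the $L^1$-convergence $F_k\to M_\apar\cap\{x_3\geq0\}$ along a subsequence, after possibly applying the reflection $x_3\mapsto-x_3$. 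Combined with the convergence of perimeters this upgrades by standard geometric measure theory to the weak-$*$ convergence $\Lambda^{(k)}_{t^*_k}\rightharpoonup\solD_3$ of the associated boundary varifolds without loss of mass.

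The remaining step is to extract a contradiction from the equivariant topology of the sweepout. Since $f(t,\cdot)$ is equivariant, each axis segment $\xi_\iota$, being the fixed-point locus of $\rotation_\iota$, is preserved setwise, and the intersection pattern of $\Lambda^{(k)}_t$ with $\xi_1\cup\xi_2\cup\xi_3$ therefore coincides topologically with that of $\Sigma_t$. This is fundamentally incompatible with the limiting surface $\solD_3$ in each case: for the $\solC$-sweepout $\Lambda^{(k)}_{t^*_k}$ meets $\xi_1$ transversely in a single interior point while $\solD_3$ contains $\xi_1$ entirely; for the $\iota$-sweepouts $\Lambda^{(k)}_{t^*_k}$ is an annulus (hence topologically distinct from a disc) disjoint from $\xi_\iota$; and for the $\solS$-sweepout the relative homotopy class of $\Lambda^{(k)}_{t^*_k}$ with respect to $\solD_3$ in the space of $\dih_1$-equivariant embedded discs containing $\xi_1$ is non-trivial by the construction inherited from \cite[\S2]{CarlottoFranzSchulz2022}. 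The main obstacle lies in converting each of these qualitative obstructions into a quantitative area excess contradicting $\hsd^2(\Lambda^{(k)}_{t^*_k})\to\hsd^2(\solD_3)$; the standard route is to combine the varifold convergence with a $\dih_n$-equivariant blow-up analysis near the relevant axis to produce a bubble of definite area. This is the ellipsoidal transposition of \cite[Lemma~3.7]{CarlottoFranzSchulz2022} and constitutes the most delicate point of the proof.
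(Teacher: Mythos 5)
Your first two steps are sound and match the general template the paper invokes: the non-strict inequality follows from the isoperimetric inequality (Lemma~\ref{lem:isoperim}) applied at a volume-bisecting time, and in the $\solS$-case this time is every $t$ by the $\rotation_1$-symmetry argument you give. Extracting a sequence $F_k$ with bisecting volume and perimeter tending to $\hsd^2(\solD_3)$, and applying Lemma~\ref{lem:isoperim-stability} to conclude $L^1$-convergence to a half-ellipsoid, is also correct.

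The gap is in the final paragraph, and you flag it yourself. The paper does not extract a contradiction from ``equivariant topology'' of a single slice combined with a blow-up or bubbling argument; the proof it cites, namely \cite[Theorem~5.1~(II)]{FranzSchulz2023} for the $\solC$- and $\iota$-sweepouts and \cite[Proposition~3.7]{CarlottoFranzSchulz2022} for the $\solS$-sweepout, is a global continuity and connectedness argument in which the stability lemma is applied to the \emph{whole family} $\{\Omega_t\}_{t\in[0,1]}$, not to a single time $t^*_k$. Concretely, one fixes $\delta$ from Lemma~\ref{lem:isoperim-stability} and shows that if $\sup_t\hsd^2(\Lambda_t)<\hsd^2(\solD_3)+\delta$ then the sets of times where $\Omega_t$ is $\varepsilon$-close to the upper (respectively lower) half are closed, disjoint, and together account for all bisecting times, while the endpoints $t=0,1$ (where $\Omega_t$ is either $\emptyset$, $M_\apar$, or the two opposite halves) force the family to cross between them, violating connectedness. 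No topological classification of the slices enters this argument.

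Your proposed replacement is also not correct as a mathematical claim: transversal intersection of $\Lambda^{(k)}_{t^*_k}$ with $\xi_1$ at a single point (in the $\solC$-case), or disjointness from $\xi_\iota$ with annular topology (in the $\iota$-cases), does not by itself force a quantitative area excess over $\hsd^2(\solD_3)$ in the varifold limit. A $\dih_1$-equivariant disc can meet $\xi_1$ orthogonally in a single point and still have area arbitrarily close to $\hsd^2(\solD_3)$ (a tilted graph over $\solD_3$ with a small perpendicular flap near the axis), and an annulus with a shrinking central hole can converge with unit multiplicity and no extra mass to a disc. So the ``qualitative obstruction'' you identify does not obviously convert into the quantitative excess you need, and you correctly observe this is the delicate point, but you do not supply the missing argument. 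The correct route is the continuity argument sketched above; you should replace the last paragraph by it and drop the bubbling heuristic.
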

 
\begin{proof}
Given the stability of the isoperimetric inequality in $M_\apar$ stated in Lemma~\ref{lem:isoperim-stability}, the proof is exactly the same as in \cite[Theorem~5.1~(II)]{FranzSchulz2023} (for the sweepouts from Lemmata~\ref{lem:sweepout-C} and \ref{lem:sweepout-annulus}) respectively \cite[Proposition~3.7]{CarlottoFranzSchulz2022} 
(for the sweepout from Lemma~\ref{lem:sweepout-S}).
\end{proof}

\section{Geometric and topological control}\label{sec:control}

In this section we prove Theorems~\ref{thm:solC}--\ref{thm:annulus}. 
We recall the fact that, since the ambient manifold $M_\apar$ is simply connected, any properly embedded surface in $M_\apar$ must be orientable. 
Moreover, as $M_\apar$ is strictly convex, \cite[Lemma 2.4]{FraserLi2014} implies that every free boundary minimal surfaces in $M_\apar$ is necessarily connected. 

Let $\{\Sigma_t\}_{t\in[0,1]}$ be any one of the equivariant sweepouts of $M_\apar$  constructed in Lemmata~\ref{lem:sweepout-C}--\ref{lem:sweepout-S}. 
Assuming $\apar_3>\apar_1,\apar_2$, the width estimate stated in Lemma~\ref{lem:width} and the mean-convexity of $\partial M_\apar$ ensure that the min-max theorem \cite[Theorem~1.4]{FranzSchulz2023} applies:  
In each case, we obtain a min-max sequence $\{\Sigma^j\}_{j\in\N}$ converging in the sense of varifolds to $m\Gamma$, where $\Gamma$ is a compact, connected, embedded, equivariant free boundary minimal surface in $M_\apar$ and where the multiplicity $m$ is a positive integer. 
Moreover, the width $W_\Pi$ coincides with $m\hsd^2(\Gamma)$. 
It remains to verify the desired properties of $\Gamma$. 
In particular, the topology of $\Gamma$ must be determined, because topology is not necessarily preserved under varifold convergence. 

We start with a classification of all planar free boundary minimal surfaces in $M_\apar$ which will allow us to conclude that the solutions constructed in Theorems \ref{thm:solC} and \ref{thm:solS} are in fact nonplanar.
 
\begin{lemma}\label{lem:planar}
Let $\Sigma$ be a free boundary minimal surface in $M_\apar$ which is planar in the sense that $\Sigma=M_\apar\cap P$ for some plane $P\subset\R^3$.  
Then $\Sigma\in\{\solD_1,\solD_2,\solD_3\}$ up to ambient isometries. 
\end{lemma}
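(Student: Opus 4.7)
A planar surface is automatically minimal in the interior, so the content of the claim lies entirely in the free boundary condition: $\Sigma$ must meet $\partial M_\apar$ orthogonally along the ellipse $\partial\Sigma=P\cap\partial M_\apar$. The plan is to translate this into a linear-algebraic condition on the unit normal to $P$ and then conclude by a short case analysis in the parameters $\apar_i$.

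Let $\nu=(\nu_1,\nu_2,\nu_3)$ be a unit normal to $P$ and write $P=\{x\in\R^3\mid\nu\cdot x=c\}$. Since the outward normal to $\partial M_\apar$ at a point $x$ is proportional to $(x_1/\apar_1^2,x_2/\apar_2^2,x_3/\apar_3^2)$, the orthogonality requirement amounts to saying that this vector lies in $P$ at every $x\in\partial\Sigma$, or equivalently that the linear function
\[
L(x)=\frac{\nu_1 x_1}{\apar_1^2}+\frac{\nu_2 x_2}{\apar_2^2}+\frac{\nu_3 x_3}{\apar_3^2}
\]
vanishes along the ellipse $\partial\Sigma$. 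An affine function on $P$ that vanishes at three non-collinear points vanishes identically, so $L\equiv 0$ on $P$. This forces $\nabla L$ to be parallel to $\nu$ and $L(0)$ to be consistent with $c=0$; equivalently, $P$ passes through the origin and $\nu$ is an eigenvector of the diagonal matrix $\operatorname{diag}(1/\apar_1^2,1/\apar_2^2,1/\apar_3^2)$.

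The case analysis is then immediate. If $\apar_1,\apar_2,\apar_3$ are pairwise distinct, the diagonal matrix has simple spectrum with eigendirections $e_1,e_2,e_3$, hence $\nu=\pm e_k$ for some $k$, and $\Sigma=\solD_k$ directly. If exactly two of the parameters coincide, say $\apar_1=\apar_2\neq\apar_3$, then either $\nu=\pm e_3$ yielding $\solD_3$, or $\nu$ lies in $\{x_3=0\}$ so that $P$ contains the $x_3$-axis; in the latter situation the rotation about the $x_3$-axis sending $\nu$ to $\pm e_1$ is an ambient isometry of $M_\apar$ precisely because $\apar_1=\apar_2$, and it maps $\Sigma$ to $\solD_1$. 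The fully symmetric case $\apar_1=\apar_2=\apar_3$ realises $M_\apar$ as a round ball in which every equatorial disc is congruent to $\solD_3$. I do not anticipate any substantial obstacle: the only subtlety worth noting is that the degeneracies in the spectrum of $\operatorname{diag}(1/\apar_i^2)$ correspond exactly to the additional continuous rotational symmetries of $M_\apar$, so the non-uniqueness of the eigenvector $\nu$ in those cases is absorbed into the ambient symmetry group, as required.
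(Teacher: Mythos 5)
Your argument is correct and takes a genuinely different, more elementary route than the paper. The paper proceeds indirectly: it first invokes the Fraser--Li intersection lemma (which relies on strict convexity of $M_\apar$) to show $\Sigma$ must meet $\solD_1$, then studies the segment $\xi=\Sigma\cap\solD_1$ and observes that its two endpoints are critical points of the squared chord-length function on $\partial\solD_1$, so $\xi$ is a principal axis of the ellipse $\solD_1$; repeating with $\solD_3$ in place of $\solD_1$ then forces $\Sigma=\solD_2$ up to a coordinate rotation. You instead translate the free boundary condition directly into the statement that the linear form $L(x)=\sum_\iota\nu_\iota x_\iota/\apar_\iota^2$ vanishes on the boundary ellipse $\partial\Sigma$; since the ellipse affinely spans $P$, this gives $P=\ker L$, hence $P$ passes through the origin and $\nu$ is an eigenvector of $\operatorname{diag}(1/\apar_1^2,1/\apar_2^2,1/\apar_3^2)$, after which the case analysis on the spectrum exactly mirrors the extra rotational symmetries of $M_\apar$. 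What your approach buys: it is self-contained (no appeal to the Fraser--Li lemma or to strict convexity), shorter, and makes transparent why spectral degeneracies are absorbed by ambient isometries. What the paper's route buys: it stays in purely geometric terms and reuses a lemma already cited elsewhere in the article. One small imprecision in your write-up worth fixing: the condition is that the ellipsoid normal lies in the \emph{direction space} of $P$ (equivalently is orthogonal to $\nu$), not literally ``in $P$''; your displayed equation $L(x)=0$ already encodes the correct statement.
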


\begin{proof} 
Since $M_\apar$ is strictly convex, \cite[Lemma 2.4]{FraserLi2014} implies that 
$\Sigma$ intersects $\solD_1$. 
If $\Sigma=\solD_1$ the proof concludes. 
Otherwise, the intersection $\xi=\Sigma\cap\solD_1$ is a straight line segment 
since both $\Sigma$ and $\solD_1$ are planar. 
The free boundary property of $\Sigma$ implies that $\xi$ meets $\partial\solD_1$ orthogonally in both its endpoints $\gamma(s_0)$ and $\gamma(t_0)$, where we have parametrised $\partial\solD_1$ by $\gamma(s)=(0,\apar_2\cos s,\apar_3\sin s)$. 
It is elementary to show that $(s_0,t_0)$ is necessarily a critical point of the function $f(s,t)=\abs{\gamma(s)-\gamma(t)}^2$. 
By determining all critical points of $f$ explicitly, we conclude that $\xi$ coincides with either  the major or the minor axis of the ellipse $\solD_1$, respectively with a diameter of $\solD_1$ in the case $\apar_2=\apar_3$. 
Up to a coordinate rotation in the $x_2$-$x_3$-plane we have $\xi=\solD_1\cap\solD_2$.
Repeating the argument with $\solD_3$ in place of $\solD_1$ we obtain necessarily 
$\Sigma\cap\solD_3=\solD_3\cap\solD_2$. 
In particular, $\Sigma$ contains both axis of the ellipse $\solD_2$. 
Since $\Sigma$ is planar, $\Sigma=\solD_2$ follows. 
\end{proof}  

The next results pertain the general structure of properly embedded, equivariant discs which are not necessarily free boundary minimal surfaces in $M_\apar$. 
We recall that $\xi_1=M_\apar\cap\{x_2=0=x_3\}$. 

\begin{lemma}\label{lem:Hurwitz}
Let $M_\apar$ be as in \eqref{eqn:ellipsoid} and let $\Sigma\subset M_\apar$ be any smooth, properly embedded, $\dih_1$-equivariant topological disc. 
Then either $\xi_1\subset\Sigma$ or $\Sigma\cap\xi_1$ contains exactly one point and the intersection is orthogonal. 
\end{lemma}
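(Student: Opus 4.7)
The plan is to analyse the smooth involution $\tau\vcentcolon=\rotation_1|_\Sigma\colon\Sigma\to\Sigma$, whose fixed-point set is precisely $\Sigma\cap\xi_1$. First I would note that $\tau$ is not the identity, since otherwise $\Sigma$ would be contained in $\xi_1$, contradicting $\dim\Sigma=2$. A nontrivial smooth involution on a connected orientable surface is globally either orientation-preserving or orientation-reversing, and the two possibilities will correspond exactly to the two alternatives in the lemma.

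The local picture at a fixed point $p\in\Sigma\cap\xi_1$ follows from the differential $d\rotation_1|_p$, which acts as $+1$ on the $x_1$-direction and as $-1$ on the orthogonal $x_2x_3$-plane. On the invariant $2$-plane $T_p\Sigma$ there are exactly two options: either $d\tau_p=-\text{id}_{T_p\Sigma}$, in which case $T_p\Sigma$ coincides with the full $(-1)$-eigenspace and is therefore perpendicular to $\xi_1$, with $p$ isolated in $\Sigma\cap\xi_1$; or $d\tau_p$ is a linear reflection whose $(+1)$-axis necessarily matches $T_p\xi_1$, in which case the fixed set of $\tau$ is locally a smooth curve tangent to $\xi_1$. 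These correspond to $d\tau_p$ preserving or reversing the orientation of $T_p\Sigma$, so by connectedness of $\Sigma$ the same alternative holds at every fixed point, determining whether $\tau$ is globally orientation-preserving or orientation-reversing.

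If $\tau$ is orientation-preserving, I would conclude by Smith theory: a $\mathbb{Z}/2$-action on the $\mathbb{F}_2$-acyclic space $\Sigma$ has $\mathbb{F}_2$-acyclic fixed set, and combined with the local model this forces $\Sigma\cap\xi_1$ to be a single point at which $\Sigma$ meets $\xi_1$ orthogonally. If $\tau$ is orientation-reversing, the fixed set is a properly embedded compact $1$-submanifold of $\Sigma$; each component lies in the straight segment $\xi_1$, which rules out closed loops, and Smith theory again gives $\mathbb{F}_2$-acyclicity, hence a single arc. Its endpoints lie in $\partial\Sigma\subset\partial M_\apar$ as well as in $\xi_1$, hence in $\xi_1\cap\partial M_\apar=\{\pm(\apar_1,0,0)\}$, which identifies this arc with the entire segment $\xi_1$.

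The step I expect to be the principal hurdle is pinning down the global shape of the fixed set from its local structure, which is where Smith theory enters as a black box. A more self-contained alternative would be to combine Brouwer's fixed-point theorem (for nonemptiness of the fixed set) with a Riemann--Hurwitz-type Euler characteristic computation on the quotient $\Sigma/\tau$, which is presumably what the label of the lemma alludes to; everything else in the argument, namely the tangent-space dichotomy and the identification of the boundary endpoints with $\pm(\apar_1,0,0)$, is a direct calculation.
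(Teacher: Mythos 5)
Your proposal is correct, and it takes a genuinely different route from the paper for the two decisive steps. The paper first proves $\Sigma\cap\xi_1\neq\emptyset$ by a linking-number argument (a $\dih_1$-invariant loop $\gamma\cup\rotation_1\gamma$ through a point $p\in\Sigma\setminus\xi_1$ has odd linking number with $\xi_1$, yet bounds a disc inside $\Sigma$, a contradiction if $\Sigma$ misses $\xi_1$); it then quotes a lemma of Ketover for the dichotomy ``$\xi_1\subset\Sigma$ or finitely many orthogonal crossings'' and finishes with the Riemann--Hurwitz count $1=\chi(\Sigma)=2\chi(\Sigma/\dih_1)-j$, combined with $j\geq 1$ and $\chi(\Sigma/\dih_1)\leq 1$, to force $j=1$. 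You instead derive the local dichotomy directly from the eigenspace decomposition of $d\rotation_1$ at a fixed point (which is in fact the substance of the cited Ketover lemma, so your version is more self-contained there), pin down the global alternative via the orientation character of $\tau$, and then let Smith theory do the work of both non-emptiness and the count at once, since an $\mathbb{F}_2$-acyclic fixed set of a $\Z_2$-action on the disc is forced to be a single point in the orientation-preserving case and a single arc in the orientation-reversing case. The identification of that arc with all of $\xi_1$ is exactly as you say: its boundary lies in $\partial\Sigma\cap\xi_1\subset\{\pm(\apar_1,0,0)\}$, and an embedded sub-arc of the segment $\xi_1$ joining its two endpoints is $\xi_1$ itself. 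You correctly anticipate that the paper's route is Brouwer-free and Riemann--Hurwitz-based; the trade-off is that your Smith-theoretic argument is a heavier black box but cleaner, while the paper's is lighter machinery at the cost of the ad hoc linking step. One small imprecision worth tidying: at a fixed point where $d\tau_p$ is a reflection, the local fixed curve is not merely tangent to $\xi_1$ but \emph{contained} in $\xi_1$, since the fixed set of $\rotation_1$ in $M_\apar$ is precisely $\xi_1$; you use this fact implicitly in the next sentence, so it is a matter of wording rather than a gap.
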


\begin{proof}
We recall that the group $\dih_1$ is generated by the rotation $\rotation_1$ of angle $\pi$ around $\xi_1$. 
Let $p\in\Sigma\setminus\xi_1$. 
Then $p\neq\rotation_1p\in\Sigma$. 
Since $\Sigma$ is connected, a curve $\gamma\subset\Sigma$ connects $p$ and $\rotation_1p$. 
If $\gamma$ is disjoint from $\xi_1$ then $\gamma\cup\rotation_1\gamma$ is a simple closed curve winding around $\xi_1$. 
This curve is contractible in $\Sigma$ because $\Sigma$ is a topological disc. 
Therefore, $\Sigma\cap\xi_1$ must be nonempty.  
 
By \cite[Lemma~3.4~(2)]{Ketover2016Equivariant} we have either $\xi_1\subset\Sigma$ (which would complete the proof) or $\Sigma\cap\xi_1$ is finite and every intersection is orthogonal. 
Let $1\leq j\in\N$ be the number of points in $\Sigma\cap\xi_1$.  
The quotient $\Sigma'=\Sigma/\dih_1$ is a connected topological surface with boundary and therefore has Euler-Characteristic $\chi(\Sigma')\leq1$. 
A variant of the Riemann--Hurwitz formula (see e.\,g.~\cite[§\,IV.3]{Freitag2011}) 
and the fact that $\dih_1\simeq\Z_2$ implies
$1=\chi(\Sigma)=2\chi(\Sigma')-j(2-1)$ 
and thus $j=2\chi(\Sigma')-1\leq1$. 
Therefore $j=1$ as claimed. 
\end{proof}

\begin{corollary}\label{cor:D2disc}
Any smooth, properly embedded, $\dih_2$-equivariant topological disc $\Sigma\subset M_\apar$ contains two of the three segments $\xi_1,\xi_2,\xi_3$. 
\end{corollary}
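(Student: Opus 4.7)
The plan is to apply Lemma~\ref{lem:Hurwitz} to each of the three coordinate axes individually. Since $\Sigma$ is $\dih_2$-equivariant and $\dih_2$ contains the order-two subgroup generated by $\rotation_\iota$ for every $\iota\in\{1,2,3\}$, the same proof as that of Lemma~\ref{lem:Hurwitz} (up to a permutation of coordinates) yields the following dichotomy: for each $\iota$, either $\xi_\iota\subset\Sigma$, or $\Sigma\cap\xi_\iota$ consists of a single point $p_\iota$ at which $\Sigma$ and $\xi_\iota$ meet orthogonally.

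In the latter case I would further pin down the intersection point by exploiting the full $\dih_2$-symmetry. Given any $j\in\{1,2,3\}\setminus\{\iota\}$, the rotation $\rotation_j$ preserves both $\Sigma$ (by equivariance) and $\xi_\iota$ (reversing it through the origin), hence $\rotation_jp_\iota$ again lies in $\Sigma\cap\xi_\iota=\{p_\iota\}$. This forces $\rotation_jp_\iota=p_\iota$ and therefore $p_\iota\in\xi_\iota\cap\xi_j=\{0\}$. In summary, whenever $\xi_\iota$ is not contained in $\Sigma$, the intersection $\Sigma\cap\xi_\iota$ reduces to the origin and occurs orthogonally.

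The main step is then a short dimension argument. Suppose towards a contradiction that fewer than two of the segments $\xi_1,\xi_2,\xi_3$ are contained in $\Sigma$. Then there would exist two distinct indices $\iota,j$ with $\xi_\iota\not\subset\Sigma$ and $\xi_j\not\subset\Sigma$, so that $\Sigma$ meets both $\xi_\iota$ and $\xi_j$ orthogonally at the origin. The tangent plane $T_0\Sigma$ would then have to be perpendicular to two linearly independent vectors spanning $\xi_\iota$ and $\xi_j$, leaving only a one-dimensional subspace of $\R^3$---impossible for a two-dimensional tangent plane of a smoothly embedded surface. Hence at least two of the segments must be contained in $\Sigma$, proving the corollary. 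I foresee no serious obstacle: once the Riemann--Hurwitz count supplied by Lemma~\ref{lem:Hurwitz} is invoked, only elementary equivariance and linear algebra are required.
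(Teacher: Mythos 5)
Your argument is correct and follows essentially the same route as the paper: apply the dichotomy of Lemma~\ref{lem:Hurwitz} to each axis, use $\dih_2$-equivariance to pin any singleton intersection down to the origin, and then exploit the orthogonality at the origin to force containment of the remaining axes. The only difference is cosmetic: the paper first rules out $\xi_\iota\subset\Sigma$ for all three $\iota$, picks one excluded axis $\xi_k$, and argues that tangency to the other two axes at the origin contradicts the orthogonal-intersection alternative; you instead assume two axes are excluded and derive the contradiction that $T_0\Sigma$ would be perpendicular to a two-dimensional subspace. These are contrapositive phrasings of the same linear-algebra fact at the origin.
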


\begin{proof}
We recall that $\dih_2$ contains the rotation $\rotation_\iota$ of angle $\pi$ around $\xi_\iota$ for any $\iota\in\{1,2,3\}$. 
Hence, Lemma~\ref{lem:Hurwitz} implies that either $\xi_\iota\subset\Sigma$ or $\Sigma\cap\xi_\iota$ contains exactly one point. 
Being smooth and embedded, $\Sigma$ cannot contain $\xi_\iota$ for all $\iota\in\{1,2,3\}$. 
Therefore there exists $k\in\{1,2,3\}$ such that $p\in\xi_k\cap\Sigma$ is unique. 
Since $\Sigma$ is $\dih_2$-equivariant, $\rotation_\iota p\in\xi_k\cap\Sigma$ for every $\iota\in\{1,2,3\}$. 
Hence, $p=(0,0,0)$. 
Lemma~\ref{lem:Hurwitz} then implies that $\Sigma$ 
contains $\xi_\iota$ for all $\iota\in\{1,2,3\}\setminus\{k\}$ because $\Sigma$ is orthogonal to $\xi_k$ and thus tangent to $\xi_\iota$ at the origin. 
\end{proof}
 
\begin{lemma}\label{lem:equal-volumes}
Any smooth, compact, connected, properly embedded, $\dih_1$-equivariant surface $\Sigma\subset M_\apar$ which contains the segment $\xi_1$ divides $M_\apar$ into two equal volumes. 
\end{lemma}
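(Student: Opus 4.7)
The plan is to establish the lemma by showing that the rotation $\rotation_1$ interchanges the two connected components of $M_\apar \setminus \Sigma$; since $\rotation_1$ is a Euclidean isometry, this instantly yields the equality of volumes.

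First I would verify that $\Sigma$ divides $M_\apar$ into exactly two open connected components $\Omega_+$ and $\Omega_-$. Because $M_\apar$ is diffeomorphic to a closed $3$-ball, the ambient manifold is simply connected, so $\Sigma$ is orientable and hence two-sided. Moreover, $H_2(M_\apar, \partial M_\apar; \Z_2)=0$, so the compact surface $\Sigma$ is null-homologous and therefore bounds. Combined with the connectedness of $\Sigma$, this forces the decomposition $M_\apar \setminus \Sigma = \Omega_+ \sqcup \Omega_-$ into exactly two open connected pieces.

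Since $\rotation_1$ is a self-homeomorphism of $M_\apar$ preserving $\Sigma$, it permutes the pair $\{\Omega_+, \Omega_-\}$. The crucial step is to exclude the alternative that $\rotation_1$ fixes each component individually, which I would do by a local computation at any interior point $p = (x_1,0,0) \in \xi_1$ with $\abs{x_1} < \apar_1$. Because $\xi_1 \subset \Sigma$ and $\Sigma$ is smooth, the tangent plane $T_p\Sigma$ contains the direction of $\xi_1$, so any unit normal $\nu_p$ to $\Sigma$ at $p$ lies entirely in the $(x_2,x_3)$-plane. The differential $d\rotation_1$ at the fixed point $p$ acts on that plane as multiplication by $-1$, and therefore $d\rotation_1(\nu_p) = -\nu_p$. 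Consequently an infinitesimal normal pushoff of $p$ into $\Omega_+$ is mapped by $\rotation_1$ to an infinitesimal pushoff into the opposite side of $\Sigma$, which must be $\Omega_-$. Hence $\rotation_1(\Omega_+) = \Omega_-$, and since $\rotation_1$ preserves the Lebesgue measure $\hsd^3$, we conclude
\[
\hsd^3(\Omega_+) = \hsd^3(\Omega_-) = \tfrac{1}{2}\hsd^3(M_\apar).
\]

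The only slightly delicate ingredient is the infinitesimal side-swapping argument at a point of $\xi_1$, together with the preliminary fact that $M_\apar \setminus \Sigma$ has exactly two components; neither point is deep, but both crucially use orientability of $\Sigma$ and the ball-like topology of $M_\apar$, so I would state them explicitly rather than treat them as obvious.
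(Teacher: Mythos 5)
Your proof is correct and takes essentially the same approach as the paper: both observe that a unit normal to $\Sigma$ at a point of $\xi_1$ is reversed by $\rotation_1$, so $\rotation_1$ swaps the two components of $M_\apar\setminus\Sigma$, and both conclude by the fact that $\rotation_1$ is measure-preserving. Your write-up is slightly more detailed about why $M_\apar\setminus\Sigma$ has exactly two components, but the argument is the same.
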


\begin{proof}
The surface $\Sigma$ is connected by assumption and orientable because $M_\apar$ is simply connected. 
Thus, $M_\apar\setminus\Sigma$ has exactly two connected components $F_1,F_2\subset M_\apar$. 
Let $\nu_0$ be a unit normal vector for $\Sigma$ at $0\in\xi_1\subset\Sigma$ and recall that $\rotation_1$ denotes the generator of $\dih_1$. 
Then, $\rotation_1\nu_0=-\nu_0$ and therefore $\rotation_1F_1=F_2$. 
Since $\rotation_1$ is an isometry, the claim $\hsd^3(F_1)=\hsd^3(F_2)$ follows. 
\end{proof}

\begin{proof}[Proof of Theorem \ref{thm:solC}]
Let $m\solC$ be the limit of the min-max sequence resulting from applying \cite[Theorem~1.4]{FranzSchulz2023} to the $\dih_1$-sweepout $\{\Sigma_t^{\solC}\}_{t\in[0,1]}$ constructed in Lemma~\ref{lem:sweepout-C}. 
Every surface along the min-max sequence is a topological disc, hence the lower semicontinuity result \cite[Theorem~1.8]{FranzSchulz2023} implies that the first Betti number of $\solC$ vanishes. 
Since $M_\apar\subset\R^3$ does not contain any closed minimal surfaces, we directly obtain that $\solC$ has genus zero and connected boundary (cf.~\cite[Proposition~A.1]{FranzSchulz2023}), proving that $\solC$ is a free boundary minimal disc. 
\begin{enumerate}[label={\normalfont(\roman*)},wide]
\item[\ref{thm:solC-i}]
We recall that $\hsd^2(\Sigma_t^{\solC})<2\hsd^2(\solD_3)$ for all $t\in[0,1]$. 
Thus, 
\begin{align}\label{eqn:area-C}
\hsd^2(\solD_3)<W_\Pi=m\hsd^2(\solC)<2\hsd^2(\solD_3). 
\end{align}
Since $\apar_3\geq2\apar_1$ by assumption, 
$\hsd^2(\solD_1)=\pi\apar_2\apar_3\geq2\pi\apar_2\apar_1=2\hsd^2(\solD_3)$. 
Therefore, \eqref{eqn:area-C} implies $\solC\neq\solD_1$ and $\solC\neq\solD_3$.   
The claim $\solC\neq\solD_2$ follows from the fact that $\xi_1\not\subset\solC$ which we will prove below to obtain statement \ref{thm:solC-ii}. 
Lemma~\ref{lem:planar} then implies that $\solC$ is nonplanar. 

\item[\ref{thm:solC-ii}]
By Lemma~\ref{lem:Hurwitz}, the surface $\solC$ either intersects the segment $\xi_1$ exactly once and the intersection is orthogonal, or $\xi_1\subset\solC$. 
By Lemma~\ref{lem:sweepout-C}, every surface along the min-max sequence intersects $\xi_1$ orthogonally. 
The $\dih_1$-equivariance then implies that if $\xi_1\subset\solC$, the multiplicity $m\in\N$ is even 
by \cite[Theorem~3.2.iv]{Ketover2016FBMS} (see also \cite[Theorem~1.3.f]{Ketover2016Equivariant}); 
in particular, $m\geq2$.  
Moreover, $\solC$ divides $M_\apar$ into two equal volumes by Lemma~\ref{lem:equal-volumes}. 
The isoperimetric inequality stated in Lemma~\ref{lem:isoperim} 
then yields $\hsd^2(\solC)\geq\hsd^2(\solD_3)$ which contradicts inequality \eqref{eqn:area-C} for $m\geq2$. 
 
\item[\ref{thm:solC-iii}] 
The min-max theorem \cite[Theorem~1.4]{FranzSchulz2023} (see also \cite[Theorem 1.10]{Franz2023}) states that the equivariant index is bounded from above by the number of parameters in the sweepout. 
Thus, the $\dih_1$-equivariant index of $\solC$ is at most $1$. 
Statement~\ref{thm:solC-ii} implies that the unit normal on $\solC$ is $\dih_1$-equivariant.  
Hence any constant function on $\solC$ is $\dih_1$-equivariant. 
Since the Jacobi quadratic form is negative on nonzero constant functions, the $\dih_1$-equivariant index of $\solC$ is equal to $1$. 
We refer to \cite[§\,8]{Franz2023} for more details.   
\qedhere
\end{enumerate}
\end{proof}

\begin{proof}[Proof of Theorem \ref{thm:solS}]
Let $m\solS$ be the limit of the min-max sequence resulting from applying \cite[Theorem~1.4]{FranzSchulz2023} to the $\dih_1$-sweepout $\{\Sigma_t^{\solS}\}_{t\in[0,1]}$ constructed in Lemma~\ref{lem:sweepout-S}. 
As in the proof of Theorem \ref{thm:solC} we may apply \cite[Theorem~1.8]{FranzSchulz2023} to prove that $\solS$ is a topological disc.  
By construction, every surface along the min-max sequence contains the segment $\xi_1$. 
Consequently, $\xi_1\subset\solS$ which proves Claim~\ref{thm:solS-ii} and implies that the multiplicity $m$ is odd (see \cite[§\,7.3]{Ketover2016FBMS}). 
Moreover, being $\dih_1$-equivariant and containing $\xi_1$, the surface $\solS$ divides $M_\apar$ into two equal volumes by Lemma~\ref{lem:equal-volumes}. 
Therefore, the isoperimetric inequality stated in Lemma~\ref{lem:isoperim} implies $\hsd^2(\solS)\geq\hsd^2(\solD_3)$. 
Since $\hsd^2(\Sigma_t^{\solS})<3\hsd^2(\solD_3)$ for all $t\in[0,1]$ we have  
\begin{align}\label{eqn:area-S}
\hsd^2(\solD_3)<W_\Pi=m\hsd^2(\solS)<3\hsd^2(\solD_3). 
\end{align}
In particular, $m<3$, and being odd, $m=1$. 
Since $\apar_3\geq3\apar_2$ by assumption, 
$\hsd^2(\solD_2)=\pi\apar_1\apar_3\geq3\pi\apar_1\apar_2=3\hsd^2(\solD_3)$.
Therefore, \eqref{eqn:area-C} implies 
$\solC\neq\solD_2$ and $\solC\neq\solD_3$. 
Moreover, $\solC\neq\solD_1$ because $\xi_1\subset\solC$. 
Hence $\solC$ is nonplanar by Lemma~\ref{lem:planar}. 
This completes the proof of \ref{thm:solS-i}. 

As in the proof of Theorem~\ref{thm:solC}~\ref{thm:solC-iii}, we obtain that the $\dih_1$-equivariant index of $\solS$ is at most~$1$. 
Since $\solS$ is nonplanar containing $\xi_1$, the function $u(x_1,x_2,x_3)=x_3$ restricts to a nonzero, $\dih_1$-equivariant negative direction for the Jacobi quadratic form on $\solS$ which proves that the $\dih_1$-equivariant index of $\solS$ is equal to $1$ (cf.~\cite[§\,8]{Franz2023}). 
\end{proof}

\begin{lemma}\label{lem:D2annulus}
Let $M_\apar$ be as in \eqref{eqn:ellipsoid} and let $\Sigma\subset M_\apar$ be any smooth, properly embedded, $\dih_2$-equivariant annulus. 
Then, $\Sigma$ is disjoint from exactly one of the three segments $\xi_1,\xi_2,\xi_3$ and it intersects each of the other two segments exactly twice and orthogonally. 
\end{lemma}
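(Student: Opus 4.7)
The plan is to combine Riemann--Hurwitz-type Euler characteristic identities for each involution $\rotation_\iota$ acting on $\Sigma$ with a topological separation argument on $\partial M_\apar$. For each $\iota\in\{1,2,3\}$ I would use the identity $\chi(\Sigma)=2\chi(\Sigma/\rotation_\iota)-\chi(\Sigma\cap\xi_\iota)$, which holds whether the fixed locus $\Sigma\cap\xi_\iota$ is $0$- or $1$-dimensional (by a direct CW computation on the branched double cover $\Sigma\to\Sigma/\rotation_\iota$). First I would rule out $\xi_\iota\subset\Sigma$: in that case $\Sigma\cap\xi_\iota=\xi_\iota$ is an arc with endpoints on $\partial\Sigma$, hence $\chi(\Sigma\cap\xi_\iota)=1$, forcing $\chi(\Sigma/\rotation_\iota)=1/2$, which is absurd. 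Consequently \cite[Lemma~3.4~(2)]{Ketover2016Equivariant} ensures that $\Sigma\cap\xi_\iota$ is finite and orthogonal, and setting $j_\iota\vcentcolon=|\Sigma\cap\xi_\iota|$ we obtain $j_\iota=2\chi(\Sigma/\rotation_\iota)\in\{0,2\}$ since the connected compact surface with boundary $\Sigma/\rotation_\iota$ satisfies $\chi\leq 1$. An orbifold Euler characteristic count for the whole $\dih_2$-action on $\Sigma$ then yields $\sum_\iota j_\iota=4\chi(\Sigma/\dih_2)$, so $\sum_\iota j_\iota\in\{0,4\}$; the value $4$ gives precisely the configuration asserted in the lemma.

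The central obstacle is to rule out $\sum_\iota j_\iota=0$, i.e.~$\Sigma\cap(\xi_1\cup\xi_2\cup\xi_3)=\emptyset$. In that case $\Sigma$ separates the simply connected $M_\apar$ into two components $U,V$, and since each axis segment is connected, disjoint from $\Sigma$ and contains the origin, all three lie in the same component, say $U$; in particular the six endpoints $\xi_\iota\cap\partial M_\apar$ belong to $U\cap\partial M_\apar$. The two boundary circles of $\Sigma$ cut $\partial M_\apar\simeq S^2$ into two topological discs $D_+,D_-$ and one annulus $A$, and a local two-sided argument at each boundary circle of $\Sigma$ shows that $D_+,D_-$ lie on the same side of $\Sigma$ while $A$ lies on the opposite side. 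Up to swapping $U,V$ we therefore have $U\cap\partial M_\apar=D_+\cup D_-$ and $V\cap\partial M_\apar=A$, so $A$ avoids all six axis endpoints.

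To conclude, I would analyse the induced $\dih_2$-action on the triple $\{D_+,D_-,A\}$. If all three components are individually $\dih_2$-invariant, then $D_+$ is a $\dih_2$-invariant topological disc on $\partial M_\apar$; since any finite group action on a disc has a common fixed point (by Smith theory, or by iterating the fact that each involution has a nonempty fixed subdisc inside $D_+$), this produces a $\dih_2$-fixed point in $D_+$, contradicting $\mathrm{Fix}(\dih_2)\cap\partial M_\apar=\xi_1\cap\xi_2\cap\xi_3\cap\partial M_\apar=\emptyset$. Otherwise some $g\in\dih_2$ swaps $D_+$ and $D_-$ while preserving $A$, and a fixed point of $g$ in $D_+$ would also lie in $gD_+=D_-$; hence $g$ has no fixed points in $D_+\cup D_-$, so the two points of $\mathrm{Fix}(g)\cap\partial M_\apar$ must lie in $A$. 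These are precisely the endpoints of $\xi_\iota$ for the index $\iota$ with $g=\rotation_\iota$, contradicting the fact that $A$ avoids all axis endpoints.
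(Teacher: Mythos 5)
Your overall strategy is sound and genuinely different from the paper's. The paper also uses Riemann--Hurwitz to get $j_\iota\in\{0,2\}$, but then finishes entirely by analysing the $\dih_2$-action on the boundary circles $\gamma_1,\gamma_2$: a fixed boundary point would create a third boundary component (ruling out $\xi_\iota\subset\Sigma$), $j_1=j_2=2$ forces $\rotation_3\gamma_1=\gamma_1$ and hence $j_3=0$, and $j_1=j_2=0$ forces $\rotation_1$ and $\rotation_2$ to both act on $\gamma_1$ as the (unique) free isometric involution of a metric circle, whence $\rotation_3$ fixes $\gamma_1$ pointwise, a contradiction. Your version replaces the last two steps by the orbifold Euler characteristic identity $\sum_\iota j_\iota=4\chi(\Sigma/\dih_2)\in\{0,4\}$ (a clean and slightly slicker reduction) and then by the separation argument on $\partial M_\apar\simeq S^2$. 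The latter is purely topological and does not invoke the metric structure, which the paper's antipodal-map argument implicitly relies on; that is a genuine advantage of your route.

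There is, however, a gap in the final step. You define $U$ as the component of $M_\apar\setminus\Sigma$ containing the origin (and hence all three segments $\xi_\iota$), so you cannot subsequently declare ``up to swapping $U,V$'' that $U\cap\partial M_\apar=D_+\cup D_-$: the label $U$ is already pinned down, and a priori it could instead be the component with $U\cap\partial M_\apar=A$. If that happens, the six axis endpoints lie in $A$, and your ``otherwise'' branch (some $g$ swaps $D_\pm$, so $\operatorname{Fix}(g)\cap\partial M_\apar\subset A$) produces no contradiction. The fix is short and actually simplifies the case split: in the case that some $g\in\dih_2$ swaps $D_+$ and $D_-$, the stabiliser of $D_+$ in $\dih_2$ still has order two, say generated by $\rotation_k$, and $\rotation_k$ acts on the disc $D_+$ by an orientation-preserving finite-order homeomorphism, hence fixes a point of $D_+$. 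That fixed point is an endpoint of $\xi_k$, so the endpoints of $\xi_k$ lie in $D_+\cup D_-$, while (as you argued) the endpoints of $\xi_\iota$ for $\iota\neq k$ lie in $A$. But all six endpoints must lie in the single set $U\cap\partial M_\apar$, which equals either $D_+\cup D_-$ or $A$; either way this is a contradiction, and no WLOG is needed. With this correction your argument is complete.

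One further small remark: your claim that $\rotation_k$ has a fixed point in the invariant disc $D_+$ uses that its restriction to $D_+$ is orientation-preserving (since $\rotation_k\in\mathrm{SO}(3)$ restricted to $\partial M_\apar\simeq S^2$ preserves orientation); it is worth saying this explicitly, as an orientation-reversing involution of a disc need not fix an interior point.
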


\begin{proof}
Let $\gamma_1$ and $\gamma_2$ denote the two boundary components of $\Sigma$. 
Let $W_1$ and $W_2$ be the two connected components of $\partial M_\apar\setminus\gamma_1$ labelled such that $\gamma_2\subset W_2$.  
Suppose there exist $p\in\gamma_1$ and $\rotation\in\dih_2\setminus\{\operatorname{id}\}$ such that $\rotation p=p$. 
Then $\rotation\gamma_1=\gamma_1$ and $\rotation\gamma_2\subset\rotation W_2=W_1$ would be a third boundary component of $\Sigma$. 
This contradiction proves that $\Sigma$ does not contain any of the three segments $\xi_1,\xi_2,\xi_3$, because the endpoints of $\xi_\iota$ being fixed by $\rotation_\iota\in\dih_2$ cannot be on $\partial\Sigma$. 

Given any $k\in\{1,2,3\}$, the set $\Sigma\cap\xi_k$ is finite (possibly empty) by \cite[Lemma~3.4~(2)]{Ketover2016Equivariant}, and every occurring intersection is orthogonal. 
Let $j_k\in\N\cup\{0\}$ be the cardinality of $\Sigma\cap\xi_k$. 
As in the proof of Lemma~\ref{lem:Hurwitz} we consider the connected topological surface $\Sigma/\rotation_k$ with Euler-Characteristic $\chi(\Sigma/\rotation_k)\leq1$ and apply the Riemann--Hurwitz formula \cite[§\,IV.3]{Freitag2011} to obtain
\begin{align}\label{eqn:20240610}
0=\chi(\Sigma)=2\chi(\Sigma/\rotation_k)-j_k
\end{align}
and thus $j_k\in\{0,2\}$.  
If $j_1=2$ then $\chi(\Sigma/\rotation_1)=1$ implying that 
$\Sigma/\rotation_1$ has connected boundary and thus $\rotation_1\gamma_1=\gamma_2$. 
If additionally $j_2=2$ then $\rotation_2\gamma_2=\gamma_1$ 
and $\rotation_3\gamma_1=\rotation_2\circ\rotation_1\gamma_1=\gamma_1$. 
Consequently, $\Sigma/\rotation_3$ has two boundary components, implying 
$\chi(\Sigma/\rotation_3)=0$ and $j_3=0$ by \eqref{eqn:20240610}. 
We conclude that $\Sigma$ is disjoint from at least one of the segments in question. 

Towards a contradiction, suppose that $\Sigma$ is disjoint from two of the segments -- without loss of generality $\xi_1$ and $\xi_2$. 
Then $j_1=0=j_2$ and (arguing as above) $\rotation_1\gamma_1=\gamma_1=\rotation_2\gamma_1$. 
Let $p\in\gamma_1$. 
As shown in the first paragraph of the proof, $\rotation_k p \neq p$ for any $k\in\{1,2,3\}$. 
Hence, the set $\gamma_1\setminus\{p,\rotation_1(p)\}$ has two connected components $c_1$ and $c_2$ 
satisfying $\rotation_1c_1=c_2$; in particular, they are of equal length. 
Arguing similarly for $\gamma_1\setminus\{p,\rotation_2p\}$ we obtain 
$\rotation_2 p =\rotation_1 p$ and thus the contradiction $\rotation_3 p =\rotation_2\circ\rotation_1 p  =p$. 
Therefore $\Sigma$ is disjoint from exactly one of the three segments and the proof concludes. 
\end{proof}

\begin{proof}[Proof of Theorem \ref{thm:annulus}]
Let $\apar\in\interval{0,\infty}^3$ be arbitrary. 
If $\apar_1=\apar_2=\apar_3$, then $M_\apar\subset\R^3$ is a round ball 
and the claim follows by rescaling and rotating the critical catenoid in $\B^3$ suitably. 
Otherwise, we may assume $\apar_3>\max\{\apar_1,\apar_2\}$ up to a change of coordinates, such that 
$\hsd^2(\solD_3)=\min_{\ell\in\{1,2,3\}}\hsd^2(\solD_\ell)$. 
In particular, the width estimate stated in Lemma \ref{lem:width} applies.  
\begin{enumerate}[label={\normalfont(\roman*)},wide]
\item[\ref{thm:annulus-i}]
Given $\iota\in\{1,2,3\}$, let $m\solA_\iota$ be the limit of the min-max sequence 
$\{\Sigma^{\iota,j}\}_{j\in\N}$
 resulting from applying \cite[Theorem~1.4]{FranzSchulz2023} to the $\dih_2$-sweepout $\{\Sigma_t^{\iota}\}_{t\in[0,1]}$ of $M_\apar$ constructed in Lemma~\ref{lem:sweepout-annulus}. 
The corresponding area estimates imply 
\begin{align}\label{eqn:area-annulus}
\hsd^2(\solD_3)<W_\Pi=m\hsd^2(\solA_\iota)<2\hsd^2(\solD_3). 
\end{align}
Since every surface along the min-max sequence is an annulus, the topological lower semicontinuity results \cite[Theorems~1.8--9]{FranzSchulz2023} imply that $\solA_\iota$ has genus zero and at most two boundary components. 
Towards a contradiction, suppose that $\solA_\iota$ is a topological disc. 
Since $\solA_\iota$ is $\dih_2$-equivariant, Corollary~\ref{cor:D2disc} implies that 
$\solA_\iota$ contains two of the three segments $\xi_1,\xi_2,\xi_3$. 
However, by Lemma~\ref{lem:sweepout-annulus}, every surface along the min-max sequence intersects two of the segments orthogonally. 
Therefore, the $\dih_2\cong\Z_2\times\Z_2$-equivariance implies that the multiplicity $m$ is even by \cite[Theorem~3.2.iv]{Ketover2016FBMS}.  
Applying Lemmata~\ref{lem:equal-volumes} and \ref{lem:isoperim} yields $\hsd^2(\solA_\iota)\geq\hsd^2(\solD_3)$ which contradicts \eqref{eqn:area-annulus} for $m\geq2$.  
Therefore, $\solA_\iota$ is an annulus as claimed, satisfying \ref{thm:annulus-i}. 

\item[\ref{thm:annulus-ii}] 
It remains to prove that $\solA_\iota$ is disjoint from the segment $\xi_\iota$ and intersects the other two segments orthogonally in order to distinguish the three free boundary minimal annuli in question. 
Towards a contradiction suppose that $\solA_\iota$ intersects $\xi_\iota$. 
By Lemma~\ref{lem:D2annulus} the $\dih_2$-equivariant annulus 
$\solA_\iota$ must be disjoint from $\xi_\ell$ for some $\ell\in\{1,2,3\}\setminus\{\iota\}$. 
Let $\varepsilon>0$ be sufficiently small, such that the $\varepsilon$-neighbourhood $U_\varepsilon\solA_\iota$ is still disjoint from $\xi_\ell$ and such that 
$U_\varepsilon\solA_\iota\cap\partial M_\apar$ has two connected components $N_1$ and $N_2$, one around each boundary component of $\solA_\iota$. 
By \cite[Theorem~4.11]{FranzSchulz2023} we may apply a topological surgery procedure to all surfaces $\Sigma^{\iota,j}$ in the min-max sequence with sufficiently large $j\in\N$ to obtain 
$\dih_2$-equivariant surfaces $\tilde\Sigma^{\iota,j}\subset U_\varepsilon\solA_\iota$
such that the sequence $\{\tilde\Sigma^{\iota,j}\}_{j}$ still converges to $m\solA_\iota$ in the sense of varifolds. 

Since $\Sigma^{\iota,j}$ is an annulus, \cite[Lemma~3.3 and~3.6]{FranzSchulz2023}  imply that at most one connected component of $\tilde\Sigma^{\iota,j}$ is an annulus and its remaining connected components are all topological discs or spheres. 
Indeed, employing \cite[Definition~1.6]{FranzSchulz2023}, we have genus complexity 
$\gsum(\tilde\Sigma^{\iota,j})=\gsum(\Sigma^{\iota,j})=0$ and boundary complexity 
$\bsum(\tilde\Sigma^{\iota,j})\leq\bsum(\Sigma^{\iota,j})=1$ 
(recalling that $1+\bsum$ counts the number of boundary components of a connected surface).  
Since the limit $\solA_\iota$ is also annulus, the topological lower semi-continuity stated in \cite[Theorem~4.11]{FranzSchulz2023} then implies that exactly one connected component 
$\hat\Sigma^{\iota,j}$ of $\tilde\Sigma^{\iota,j}$ has annular topology, because $1=\bsum(\solA_\iota)\leq\bsum(\tilde\Sigma^{\iota,j})\leq1$. 
In particular, $\hat\Sigma^{\iota,j}$ is $\dih_2$-equivariant and has two boundary components $\hat\gamma_1,\hat\gamma_2\subset N_1\cup N_2$. 
Since $\rotation_\iota N_1=N_2$ we have $\rotation_\iota\hat\gamma_1=\hat\gamma_2$. 
However, the boundary components $\gamma_1,\gamma_2$ of the original surface $\Sigma^{\iota,j}$, which is $\dih_2$-equivariantly isotopic to one of the surfaces $\Sigma_t^{\iota}$ in the sweepout constructed in Lemma~\ref{lem:sweepout-annulus}, satisfy 
$\rotation_\iota\gamma_k=\gamma_k$ for both $k\in\{1,2\}$. 
We recall that $\hat\Sigma^{\iota,j}$ is obtained from $\Sigma^{\iota,j}$ through surgery 
in the sense of \cite[Definition~3.1~(c)]{FranzSchulz2023}, i.\,e.~by 
\begin{enumerate}[label={(\arabic*)},nosep]
\item\label{surgery1} discarding connected components.
\item\label{surgery2} cutting away a neck,
\item\label{surgery3} cutting away a half-neck formed by a single boundary component,
\item\label{surgery4} cutting away a half-neck formed by two different boundary components,
\end{enumerate}
and note that any $\dih_2$-equivariant surgery procedure involving only operations of type 
\ref{surgery1}--\ref{surgery3} preserve the property that $\rotation_\iota\gamma_k=\gamma_k$ for both $k\in\{1,2\}$. 
The first occurrence of operation \ref{surgery4} however irreparably reduces the boundary complexity $\bsum(\tilde\Sigma^{\iota,j})$ from $1$ to $0$. 
This contradiction proves the claim.

\item[\ref{thm:annulus-iii}]
The computation of the equivariant index is analogous to the proof of Theorem~\ref{thm:solC}~\ref{thm:solC-iii}. 
\qedhere
\end{enumerate}
\end{proof}

\setlength{\parskip}{.75ex plus 1pt minus 2pt}
\bibliography{fbms-bibtex}

\printaddress

\end{document}